\newcommand{\Mdef}[2]{\newcommand{#1}{\relax \ifmmode #2 \else $#2$\fi}}
\newcommand{\codim}{\mathrm{codim}}
\newcommand{\supp}{\mathrm{supp}}
\newcommand{\cosupp}{\mathrm{cosupp}}
\newcommand{\tensor}{\otimes}
\newcommand{\Hom}{\mathrm{Hom}}
\Mdef{\bhom}{\mathbf{\hat{H}om}}
\Mdef{\Mod}{\mathrm{mod}}
\newcommand{\st}{\; | \;}
\newtheorem{thm}{Theorem}[section]
\newtheorem{lemma}[thm]{Lemma}
\newtheorem{prop}[thm]{Proposition}
\newtheorem{cor}[thm]{Corollary}
\theoremstyle{definition}
\newtheorem{defn}[thm]{Definition}
\newtheorem{example}[thm]{Example}
\newtheorem{remark}[thm]{Remark}
\newcommand{\qqed}{\qed \\[1ex]}
\renewenvironment{proof}[1][\hspace*{-.8ex}]{\noindent {\bf Proof #1:\;}}{\qqed}
\Mdef{\PH} {\Phi^H}
\Mdef{\PK} {\Phi^K}
\Mdef{\PL} {\Phi^L}
\Mdef{\PT} {\Phi^{\T}}
\Mdef{\ef}{E{\cF}_+}
\Mdef{\etf}{\widetilde{E}{\cF}}
\Mdef{\eg}{E{G}_+}
\Mdef{\etg}{\tilde{E}{G}}
\Mdef{\infl}{\mathrm{inf}}
\Mdef{\defl}{\mathrm{def}}
\Mdef{\res}{\mathrm{res}}
\Mdef{\ind}{\mathrm{ind}}
\Mdef{\coind}{\mathrm{coind}}
\Mdef{\univ}{\mathcal{U}}
\Mdef{\Fp}{\mathbb{F}_p}
\Mdef{\Zpinfty}{\Z /p^{\infty}}
\Mdef{\Zpadic}{\Z_p^{\wedge}}
\newcommand{\bi}{\begin{itemize}}
\newcommand{\be}{\begin{enumerate}}
\newcommand{\bc}{\begin{center}}
\newcommand{\bd}{\begin{description}}
\newcommand{\ei}{\end{itemize}}
\newcommand{\ee}{\end{enumerate}}
\newcommand{\ec}{\end{center}}
\newcommand{\ed}{\end{description}}
\newcommand{\dichotomy}[2]{\left\{ \begin{array}{ll}#1\\#2 \end{array}\right.}
\newcommand{\lra}{\longrightarrow}
\newcommand{\spec}{\mathrm{Spec}}
\newcommand{\spcc}{\mathrm{Spcc}}
\Mdef{\we}{\mathbf{we}}
\Mdef{\fib}{\mathbf{fib}}
\Mdef{\cof}{\mathbf{cof}}
\Mdef{\BI}{\mathcal{BI}}
\newcommand{\fibre}{\mathrm{fibre}}
\newcommand{\colim}{\mathop{  \mathop{\mathrm {lim}} \limits_\rightarrow} \nolimits}
\Mdef{\B}{\mathbb{B}}
\Mdef{\C}{\mathbb{C}}
\Mdef{\D}{\mathbb{D}}
\Mdef{\E}{\mathbb{E}}
\Mdef{\T}{\mathbb{T}}
\Mdef{\F}{\mathbb{F}}
\Mdef{\G}{\mathbb{G}}
\Mdef{\I}{\mathbb{I}}
\Mdef{\N}{\mathbb{N}}
\Mdef{\Q}{\mathbb{Q}}
\Mdef{\R}{\mathbb{R}}
\Mdef{\bbS}{\mathbb{S}}
\Mdef{\Z}{\mathbb{Z}}
\Mdef{\bA}{\mathbb{A}}
\Mdef{\bB}{\mathbb{B}}
\Mdef{\bC}{\mathbb{C}}
\Mdef{\bD}{\mathbb{D}}
\Mdef{\bE}{\mathbb{E}}
\Mdef{\bF}{\mathbb{F}}
\Mdef{\bG}{\mathbb{G}}
\Mdef{\bH}{\mathbb{H}}
\Mdef{\bI}{\mathbb{I}}
\Mdef{\bJ}{\mathbb{J}}
\Mdef{\bK}{\mathbb{K}}
\Mdef{\bL}{\mathbb{L}}
\Mdef{\bM}{\mathbb{M}}
\Mdef{\bN}{\mathbb{N}}
\Mdef{\bO}{\mathbb{O}}
\Mdef{\bP}{\mathbb{P}}
\Mdef{\bQ}{\mathbb{Q}}
\Mdef{\bR}{\mathbb{R}}
\Mdef{\bS}{\mathbb{S}}
\Mdef{\bT}{\mathbb{T}}
\Mdef{\bU}{\mathbb{U}}
\Mdef{\bV}{\mathbb{V}}
\Mdef{\bW}{\mathbb{W}}
\Mdef{\bX}{\mathbb{X}}
\Mdef{\bY}{\mathbb{Y}}
\Mdef{\bZ}{\mathbb{Z}}
\Mdef{\cA}{\mathcal{A}}
\Mdef{\cB}{\mathcal{B}}
\Mdef{\cC}{\mathcal{C}}
\Mdef{\mcD}{\mathcal{D}} 
\Mdef{\cE}{\mathcal{E}}
\Mdef{\cF}{\mathcal{F}}
\Mdef{\cG}{\mathcal{G}}
\Mdef{\mcH}{\mathcal{H}} 
\Mdef{\cI}{\mathcal{I}}
\Mdef{\cJ}{\mathcal{J}}
\Mdef{\cK}{\mathcal{K}}
\Mdef{\mcL}{\mathcal{L}}
\Mdef{\cM}{\mathcal{M}}
\Mdef{\cN}{\mathcal{N}}
\Mdef{\cO}{\mathcal{O}}
\Mdef{\cP}{\mathcal{P}}
\Mdef{\cQ}{\mathcal{Q}}
\Mdef{\mcR}{\mathcal{R}}
\Mdef{\cS}{\mathcal{S}}
\Mdef{\cT}{\mathcal{T}}
\Mdef{\cU}{\mathcal{U}}
\Mdef{\cV}{\mathcal{V}}
\Mdef{\cW}{\mathcal{W}}
\Mdef{\cX}{\mathcal{X}}
\Mdef{\cY}{\mathcal{Y}}
\Mdef{\cZ}{\mathcal{Z}}
\Mdef{\ca}{\mathcal{a}}
\Mdef{\ct}{\mathcal{t}}
\Mdef{\At}{\tilde{A}}
\Mdef{\Bt}{\tilde{B}}
\Mdef{\Ct}{\tilde{C}}
\Mdef{\Et}{\tilde{E}}
\Mdef{\Ht}{\tilde{H}}
\Mdef{\Kt}{\tilde{K}}
\Mdef{\Lt}{\tilde{L}}
\Mdef{\Mt}{\tilde{M}}
\Mdef{\Nt}{\tilde{N}}
\Mdef{\Pt}{\tilde{P}}
\Mdef{\tA}{\tilde{A}}
\Mdef{\tB}{\tilde{B}}
\Mdef{\tC}{\tilde{C}}
\Mdef{\tE}{\tilde{E}}
\Mdef{\tH}{\tilde{H}}
\Mdef{\tK}{\tilde{K}}
\Mdef{\tL}{\tilde{L}}
\Mdef{\tM}{\tilde{M}}
\Mdef{\tN}{\tilde{N}}
\Mdef{\tP}{\tilde{P}}
\Mdef{\ft}{\tilde{f}}
\Mdef{\xt}{\tilde{x}}
\Mdef{\yt}{\tilde{y}}
\Mdef{\Ab}{\overline{A}}
\Mdef{\Bb}{\overline{B}}
\Mdef{\Cb}{\overline{C}}
\Mdef{\Db}{\overline{D}}
\Mdef{\Eb}{\overline{E}}
\Mdef{\Fb}{\overline{F}}
\Mdef{\Gb}{\overline{G}}
\Mdef{\Hb}{\overline{H}}
\Mdef{\Ib}{\overline{I}}
\Mdef{\Jb}{\overline{J}}
\Mdef{\Kb}{\overline{K}}
\Mdef{\Lb}{\overline{L}}
\Mdef{\Mb}{\overline{M}}
\Mdef{\Nb}{\overline{N}}
\Mdef{\Ob}{\overline{O}}
\Mdef{\Pb}{\overline{P}}
\Mdef{\Qb}{\overline{Q}}
\Mdef{\Rb}{\overline{R}}
\Mdef{\Sb}{\overline{S}}
\Mdef{\Tb}{\overline{T}}
\Mdef{\Ub}{\overline{U}}
\Mdef{\Vb}{\overline{V}}
\Mdef{\Wb}{\overline{W}}
\Mdef{\Xb}{\overline{X}}
\Mdef{\Yb}{\overline{Y}}
\Mdef{\Zb}{\overline{Z}}
\Mdef{\db}{\overline{d}}
\Mdef{\hb}{\overline{h}}
\Mdef{\qb}{\overline{q}}
\Mdef{\rb}{\overline{r}}
\Mdef{\tb}{\overline{t}}
\Mdef{\ub}{\overline{u}}
\Mdef{\vb}{\overline{v}}
\Mdef{\hc}{\hat{c}}
\Mdef{\he}{\hat{e}}
\Mdef{\hf}{\hat{f}}
\Mdef{\hA}{\hat{A}}
\Mdef{\hH}{\hat{H}}
\Mdef{\hJ}{\hat{J}}
\Mdef{\hM}{\hat{M}}
\Mdef{\hP}{\hat{P}}
\Mdef{\hQ}{\hat{Q}}
\Mdef{\thetab}{\overline{\theta}}
\Mdef{\phib}{\overline{\phi}}
\Mdef{\uA}{\underline{A}}
\Mdef{\uB}{\underline{B}}
\Mdef{\uC}{\underline{C}}
\Mdef{\uD}{\underline{D}}
\Mdef{\bolda}{\mathbf{a}}
\Mdef{\boldb}{\mathbf{b}}
\Mdef{\bfD}{\mathbf{D}}
\Mdef{\fm}{\frak{m}}
\Mdef{\fp}{\frak{p}}
\newcommand{\fX}{\mathfrak{X}}
\Mdef{\eps}{\epsilon}
\newcommand{\cEi}{\cE^{-1}}
\renewcommand{\bd}{\mathbf{d}}
\renewcommand{\be}{\mathbf{e}}
\newcommand{\cOcF}{\cO_{\cF}}
\newcommand{\fq}{\mathfrak{q}}
\newcommand{\Rep}{\mathrm{Rep}}
\newcommand{\Rad}{R_{ad}}
\newcommand{\LcFt}{L_{\tilde{\cF}}}
\newcommand{\boldd}{\mathbf{d}}
\newcommand{\bbI}{\mathbb{I}}
\newcommand{\bbD}{\mathbb{D}}
\newcommand{\sfD}{\mathsf{D}}
\begin{document}
\title{Adelic cohomology.}

\author{J.P.C.Greenlees}
\address{Warwick Mathematics Institute 
Coventry CV4 7AL. UK.}
\email{john.greenlees@warwick.ac.uk}
\date{}

\begin{abstract}
The characteristic feature of the adeles is that they involve 
localizations of products (or equivalently restricted products of 
localizations). The point of this paper is to introduce an adelic style cohomological invariant 
of a partially ordered set with auxiliary structure which covers
several examples of established interest in commutative algebra and
stable equivariant homotopy theory. 
\end{abstract}

\thanks{I am grateful Bhargav Bhatt for suggesting the connection with
the Beilinson-Parshin complex.}
\maketitle

\tableofcontents

\newcommand{\abcat}{\mathrm{AbCat}}

\section{Introduction}
\subsection{Motivation}
The characteristic feature of the adeles is that they involve 
localizations of products (or equivalently restricted products of 
localizations). The point of this paper is to introduce an adelic style cohomological invariant 
of a partially ordered set with auxiliary structure. The 
construction covers several special cases of established interest, and 
gives a language and method of calculation in many more.  

The first example is in  commutative algebra. The Hasse
square for $\Z$ came from number theory, but there is a counterpart
for any one dimensional Noetherian ring $R$. This says that $R$ is the 
pullback of a square formed using completions at primes and
localizations at primes. Furthermore, this cube is also a
pushout. Similarly,  for a $d$-dimensional catenary Noetherian ring, the ring $R$ is
the pullback of a $(d+1)$-cube, and in fact it is also a homotopy
pullback. In our terms this states that the adelic cohomology is $R$
in degree 0.  It will be shown elsewhere \cite{adelicmodels} that this
is also the basis for  understanding the category of $R$-modules. 
A variant of this construction gives the Beilinson-Parshin adeles
\cite{Huber}. 

The second established case comes up in equivariant homotopy. If $G$ is an
$r$-dimensional torus a main result of \cite{tnqcore} states that 
the rational equivariant sphere spectrum is the pullback of an $(r+1)$-cube of
commutative ring spectra. This gives an approach to the category of
$\bbS$-modules (i.e., to the category of rational $G$-spectra). Taking homotopy,  we obtain a spectral
sequence calculating the stable homotopy groups of the sphere,  $\pi^G_*(\bbS)$. In fact if we take the fixed points of the punctured
cube, the ring spectra are all formal, and the sphere is determined by a diagram of graded
rings. This diagram of rings gives a cochain complex for the adelic
cohomology, so that the $E_2$-term of the spectral sequence for 
$\pi^G_*(\bbS)$ is the adelic cohomology. 

There are other closely related examples coming out of homotopy theory
which are not covered by the construction here. The best known of
these  is the chromatic fracture square in stable
homotopy theory, but there are many others of this type. Indeed, the
work of \cite{adelicmodels} takes the present constructions at the
level of a homotopy category and uses this as the basis of an adelic
model of a symmetric monoidal model category. The main ingredient
in this  is to show that the unit is the homotopy  pullback of a suitable
cube of rings.  In some cases the completions and localizations are
functors of the homotopy of the unit, and hence the cube gives a
spectral sequence starting with the adelic cohomology we describe
here. Curiously, the formal framework for algebra is a little more
elaborate than the homotopy theory because taking homotopy of
different types of objects is described by  different algebraic
functors. Nonetheless, the constructions here are essentially abelian
category level versions of the homotopical constructions of
\cite{adelicmodels} and provide motivation as well as calculation for
that case. 

\subsection{Context}

The basic substrate is a partially ordered set (poset) $\fX$, which
will usually be infinite.  The
order relation will always be written $\leq$.  This
needs to be endowed with additional coefficient data to define
cohomology. One of the main messages of this note is that we need one
piece of data depending  contravariantly on points $x\in \fX$ (as
completion does) and one piece of
 data depending covariantly on points $x\in \fX$  (as
 localization does); this will be illustrated by a
range of examples.

\subsection{Spectral examples}
We have in mind a number of examples arising from a tensor
triangulated category $\C$. We begin by taking the Balmer spectrum $\spcc
(\C)$, consisting of the  tensor ideals $\wp$ of the subcategory
$\C^c$ of
compact objects  (i.e., $\wp $ is
closed under completing triangles, and tensoring with an arbirary
object) which are prime (in the sense that if $x\tensor y \in \wp$
then either $x$ or $y$ is in $\wp$). To start with, $\spcc (\C)$  is a parially ordered set
under inclusion.  The formalism we need for our cohomology is not
restricted to this setting, but it will colour our
choice of terminology. 

\subsubsection{Commutative rings}
\label{subsubsec:comm}
We start with a commutative Noetherian ring $R$ and  we are interested
in the category of $R$-modules. The category $\C$ is  the derived
category $D(R)$ of $R$-modules. There is a natural bijection 
$$\mathrm{Spec}(R)\stackrel{\cong}\lra \spcc(\sfD(R))$$
where the algebraic prime $\wp_a$ corresponds to the Balmer prime
$$\wp_b=\{ M \st M_{\wp_a}\simeq 0\}. $$
This is an order reversing bijection, and we will always use the 
Balmer ordering. 

We take $\fX = \mathrm{Spec}(R)$ and note that the
the minimal elements  of $\fX$ (in the Balmer ordering) correspond to closed points.

Associated to a prime $\wp$  we have $\wp$-adic completion and
localization at  $\wp$.

\subsubsection{Rational torus-equivariant spectra}
\label{subsubsec:gq}
If $G$ is an $r$-dimensional torus we may consider the category $\C$ of
rational $G$-spectra. The category $\C^c$ is then the stable homotopy category
of rational finite $G$-complexes. Equivalently, we can take $\C$ to be
the derived category of objects from the category of
differential graded objects  of the algebraic category $\cA (G)$ of
\cite{tnq1}. 

In either case,  $\fX_a=\spcc(\C)$ is the set of closed subgroups of
$G$, but the partial order is that of 
{\em cotoral inclusion} (i.e., $K\leq H$ when $K$ is a subgroup of $H$ with
$H/K$ a torus) \cite{spcgq}. 

At the prime corresponding to the subgroup $K$, the relevant
completion is given by a function spectrum (if $K=1$ the completion of
$X$ is the cofree spectrum $F(EG_+, X)$) and the localization corresponding to  inverts the
Euler classes of complex representations $V$ with $V^K=0$. 


\subsubsection{Chromatic homotopy theory}
\label{subsubsec:chrom}
It has been shown by Hopkins and Smith \cite{HopkinsSmith} that the Balmer
spectrum of finite spectra has primes corresponding to the Morava
$K$-theories $K(n,p)$ for $0\leq n \leq \infty$ for non-zero primes
$p$, where $K(\infty, p)=H\mathbb{F}_p$ and $K(0,p)=H\Q$ independent of $p$. We write
$$\wp(p,n):=\{ X \st K(n,p)_*X=0\}, $$
and then 
$$\wp (0,p)>\wp(1, p)>\wp(2, p)>\cdots >\wp(\infty,p). $$

\section{Terminology}
We will be introducing some constructions that extend certain standard
ones, so it will be helpful to explain our notation and terminology
first in a familiar case. 

\subsection{Coefficient systems}
If $\fX$ is partially ordered set, a {\em coefficient system} on $\fX$
with values in an abelian category $\C$ is a functor
$M: \fX^{op} \lra \C$. A {\em dual coefficient system} on $\fX$ is a functor
$N: \fX \lra \C$. 

\begin{remark}
It is more usual to call these {\em local} coefficient systems, on the
grounds 
that a coefficient system takes the same value on all simplices (but
perhaps allows for monodromy) whereas a local system varies with the
simplex. We have simplified this for brevity.   

\end{remark}

\subsection{Simplicial complexes}
A {\em simplicial complex} $K$ on a vertex set $V$ is a set of non-empty finite subsets of
$V$ so that $\emptyset\neq \tau \subseteq \sigma \in K$ implies $\tau
\in K$. An element $\sigma$
of $K$ with $n+1$ vertices is said to be an {\em $n$-simplex} of $K$, and
$\fX_n$ is the set of $n$-simplices of $K$. 

\begin{remark}
Note that we have explicitly declared that the empty set is not a
simplex, to fit with our applications below. 
\end{remark}

We may think of a simplicial complex $K$ as a partially ordered set,
ordered by inclusion.

\subsection{Coefficient systems on $\fX$ and its subdivision}
Suppoes now $\fX$ is a partially ordered set, and consider the 
poset $\fX'$ of non-empty flags $F=(\wp_0\supset \cdots \supset \wp_s)$. Indeed, 
$\fX'$ is a simplicial complex whose vertex set consists of objects of $\fX$.

We note that a coefficient system $M: \fX^{op} \lra \C$
 induces a coefficient system $M_*$ on $\fX'$ by
$$M_*(\wp_0\supset \cdots \supset \wp_s)=M(\wp_0). $$
It also defines a dual coefficient system on $\fX'$ defined by 
$$M^*(\wp_0\supset \cdots \supset \wp_s)=M(\wp_s). $$
Similarly a dual coefficient system $N:\fX^{op}\lra \C$ induces
a dual coefficient system $N_*$ on $\fX'$ by using the first term in the flag,
and a coefficient system $N^*$ by using the last term. These are named
so that lower star indicates the variance is the same and an upper
star implies the variance is reversed.

\subsection{Simplicial cohomology}
\label{subsec:simpcohom}
If we have a simplicial complex $K$, the simplicial cochain complex
with coefficients in $M$ is defined by 
$$
C_{simp}^*(K ; M)=\left( 
\prod_{\sigma_0 \in K_0} M
\lra \prod_{\sigma_1 \in K_1} M 
\lra \prod_{\sigma_2 \in K_2} M \lra \cdots
\right), $$
non-zero in cohomological degrees $\geq 0$ only. The differential is
defined by $\delta=\sum_i(-1)^i\delta_i$, where 
$\delta_i$ is obtained by omitting the $i$th vertex.

More generally, if we have a dual coefficient system  $M': K'\lra \C$
on $K'$,  we may use the same method. 
$$
H_{simp}^*(K ; M')=H^*\left( 
\prod_{\sigma_0 \in K_0} M'(\sigma_0)
\lra \prod_{\sigma_1 \in K_1} M' (\sigma_1)
\lra \prod_{\sigma_2 \in K_2} M'(\sigma_2) \lra \cdots
\right)$$
In any case, if $M$ is a coefficient system on $K$ and $N$ is a dual
coefficient system on $K$ this gives us dual coefficient systems $M^*$ and $N_*$
on $K'$ and  we may define
$$H_{simp}^*(K' ; M^*) \mbox{ and }   H_{simp}^*(K' ; N_*). $$

\begin{remark}
We will generally omit the subscript $simp$ unless required for
emphasis. 
\end{remark}


\section{Towards adelic cohomology}

We now start with a poset $\fX$, and use the above ideas to define
cohomology  at various levels of generality. The constructions are all
familiar, but running through them is a good way to introduce the
relevant notation and  structure, and to emphasize  variance of
constructions. 

\subsection{Constant coefficients}
To start with we may form the poset $\fX'$ of flags. This is a
simplicial complex, so that given an
object $M$ in an abelian category with products we may consider the
simplicial  cochain complex of Subsection \ref{subsec:simpcohom} given by 
$$C^s(\fX' ;M)=\prod_{\wp_0>\cdots >\wp_s}M. $$
The coboundary 
$$\delta : C^s(\fX' ; M)\lra C^{s+1}(\fX' ; M)$$
is then defined as an alternating sum 
$$\delta=\sum_i(-1)^i \delta_i$$
where $\delta_i$ is induced by deleting the $i$th term in a flag. 
Taking cohomology, we have 
$$H^*(\fX'; M)=H^*(C^*(\fX'; M)). $$

\begin{remark}
Note that we have displayed the simplicial complex of flags $\fX'$
(rather than the poset $\fX$ itself)  in the notation for consistency
with ordinary usage. This avoids  ambiguity  when $\fX$ itself is already a
simplicial complex. 
\end{remark}

\subsection{Coefficient systems}
Next, we suppose that rather than a single object $M$, we have a
coefficient system 
$$M: \fX^{op} \lra \C. $$
Note that $M$ induces a dual coefficient system $M^*$ on $\fX'$. 
Accordingly, we may  then define a cochain complex on objects by 
$$C^s_{fl}(\fX ;M)=C^s(\fX';M^*)=\prod_{\wp_0>\cdots >\wp_s}M
(\wp_s),  $$
with the subscript $fl$ indicating that the complex  formed from
{\em flags} in $\fX$. The coboundary 
$$\delta : C^s_{fl}(\fX ; M)\lra C^{s+1}_{fl}(\fX ; M)$$
is again defined as an alternating sum 
$$\delta=\sum_i(-1)^i \delta_i .$$
Now, if $i<s+1$ the map $\delta_i$ is still induced by deleting the
$i$th term in a flag. However $\delta_{s+1}$ is defined to be the
product over $s$-simplices $(\wp_0>\cdots >\wp_s)$ with final vertex
$\wp_s$ of the maps
$$M(\wp_s)\lra \prod_{\wp_{s+1}<\wp_s} M(\wp_{s+1})$$
whose components are given by the functor $M$. Once again, one finds
that the composites $\delta_i\delta_j$ only depend on the vertices $i$
and $j$ omitted, and hence $\delta^2=0$. We may then define the
cohomology  with coefficient system  $M$ by the formula
$$H^*_{fl}(\fX; M)=H^*(C^*_{fl}(\fX; M)). $$
\begin{remark}
(i) If $\fX$ is itself a simplicial complex, the notation $C^*(\fX; N)$ is
only defined when $N$ is a {\em dual} coefficient system. However, if $N$ is
constant for example, we may treat it as a coefficient system and so
define both $C^*(\fX; N)$ and $C^*_{fl}(\fX; N)=C^*(\fX'; N^*)$. We
note that the subdivision map 
$$Sbd: C^*(\fX; N)\lra C^*(\fX'; N^*)=C^*_{fl}(\fX;
N^*)$$
is then a chain homotopy equivalence, so the similarity in notation
should cause no serious confusion. 

(ii) When  $\C$ is symmetric monoidal, and  $M$ is a diagram of 
commutative ring objects in $\C$,  the cohomology will be ring 
valued if the images of $\delta$ are ideals, as happens if the maps
$\delta_i$ are ring maps. 
\end{remark}

\begin{example}
\label{eg:completion}
(i) Taking $\fX=\spec (R)$ with the Balmer ordering, completion
defines a coefficient system, by 
$$M(\wp)=M_{\wp}^{\wedge}. $$

(ii) We  might take $\fX_a$ to be the poset of all closed subgroups of a
torus $G$. We then have the inflation coefficient system $R$, whose value
at $K$ is $H^*(BG/K)$.  If $L\subseteq K$ we have an inflation map 
$R(K)=H^*(BG/K)\lra H^*(BG/L)=R(L)$. In particular this applies to
cotoral inclusions $L\leq K$ in the sense of Subsubsection \ref{subsubsec:gq}.

(iii) On the other hand we may take $\fX_c$ to be the poset of connected
subgroups of a torus $G$. This gives a somewhat  a more complicated example. 

Then  we define a coefficient system $R$ as follows. At a connected
subgroup  $K$ it has value 
$$  R (K)=\cO_{\cF/K}=\prod_{\Kt \in \cF/K}H^*(BG/\Kt), $$
where $\cF/K$ is the set of subgroups $\Kt$ with identity component $K$.
To see this is a coefficient system, suppose $L\subseteq K$. 
We note that if $\Lt$ has identity component $L$ then $\Kt=\Lt\cdot K$
is the unique subgroup of $G$ so that (a) $\Kt$ has identity component
$K$ and (b) $\Kt/\Lt$ is a torus. This means that if we take the
product  over $\Kt$ with identity component $K$ of the maps
$$H^*(BG/\Kt)\lra \prod_{\Lt <\Kt} H^*(BG/\Lt)$$
then we get precisely a map 
$$R(K)\lra R(L) $$
as required. 

We note that the map $q: \fX_a \lra \fX_c$ taking a subgroup to its
identity component has the requisite
properties that the coefficient system $R$ on $\fX_a$ gives the 
coefficient system $\cO_{\cF}=q_!R$ in the notation of \cite{AGs}. 

(iv) 
If we start with a spectrum $M$,  then
for a prime $\wp (p, n)$ we obtain a coefficient system by taking
the homotopy of the Bousfield localization at the
corresponding Morava $K$-theory: 
$$M(\wp (p,n))=\pi_*(L_{K (p,n)}M). $$

\end{example}

\subsection{$\fX$-collections of localizations}
Rather than just consider the individual  localizations or
completions, we will consider collections indexed by the poset $\fX$.

\begin{defn}
(i) An {\em idempotent localization} $L: \C \lra \C$ is an idempotent
monad. It consists of the functor $L$,
together with a natural transformation $\eta : 1\lra L$  giving a natural 
equivalence 
$$L\eta =\eta_L: L=L\circ 1 \stackrel{\cong}\lra 
L\circ L$$


(ii) An  $\fX$-collection of localizations is a collection $\{
L_{\wp}\}$ of localizations for $\wp\in \fX$. An $\fX$-collection  is said to be {\em 
 left absorbative}  if, whenever $\wp_1\geq \wp_2$, the  natural map 
$$A_{\wp_1}(\eta_{\wp_2}): A_{\wp_1}=A_{\wp_1}\circ 
1 \stackrel{\cong}\lra 
A_{\wp_1}\circ 
A_{\wp_2}$$
is an isomorphism. An $\fX$-collection  is {\em 
 right absorbative}  if, whenever $\wp_1\geq \wp_2$, the  natural map 
$$\eta_{\wp_1}: A_{\wp_2} \stackrel{\cong}\lra 
A_{\wp_1}\circ 
A_{\wp_2}$$
is an isomorphism. 
\end{defn}

\begin{lemma}
(i)  A left absorbative $\fX$-collection $L_{\bullet}$ gives idempotent localizations 
$L_{\wp}$ which fit together to give  a functor 
$$\fX \lra [\C, \C]$$ 
(i.e.,  an inequality $\wp_2\leq \wp_1$ gives rise to a natural 
transformations  $L_{\wp_2}\lra L_{\wp_1}$, and these are closed under 
composition). 

(ii)  A right absorbative $\fX$-collection $\Lambda_{\bullet}$ gives idempotent localizations 
$\Lambda_{\wp}$ which fit together to give  a functor 
$$\fX^{op} \lra [\C, \C]$$ 
(i.e.,  an inequality $\wp_2\leq \wp_1$ gives rise to a natural 
transformations  $\Lambda_{\wp_1}\lra \Lambda_{\wp_2}$, and these are closed under 
composition). 
\end{lemma}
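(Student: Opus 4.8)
The plan is to prove (i) in detail and obtain (ii) by a dual argument. The heart of the matter is to show that left absorbativity lets us define, for each inequality $\wp_2\leq \wp_1$, a canonical natural transformation $L_{\wp_2}\lra L_{\wp_1}$, and that these compose correctly and respect the trivial inequality. First I would record the standard fact that an idempotent monad $(L,\eta)$ is equivalent to a Bousfield localization: $\eta_X\colon X\lra LX$ is the universal map to an $L$-local object, so $L$ is idempotent and $\eta_L$ and $L\eta$ agree and are isomorphisms. This is what allows us to produce maps \emph{into} $L_{\wp_1}$-local objects by a universal property.

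For an inequality $\wp_2\leq \wp_1$ (equivalently $\wp_1\geq\wp_2$) left absorbativity says $L_{\wp_1}(\eta^{\wp_2}_X)\colon L_{\wp_1}X\lraiso L_{\wp_1}L_{\wp_2}X$ is an isomorphism. I would define the comparison transformation $\nu_{\wp_2,\wp_1}\colon L_{\wp_2}\lra L_{\wp_1}$ at an object $X$ as the composite
$$
L_{\wp_2}X \xrightarrow{\;\eta^{\wp_1}_{L_{\wp_2}X}\;} L_{\wp_1}L_{\wp_2}X \xleftarrow[\;\cong\;]{\,L_{\wp_1}(\eta^{\wp_2}_X)\,} L_{\wp_1}X,
$$
that is, $\nu_{\wp_2,\wp_1} := (L_{\wp_1}\eta^{\wp_2})^{-1}\circ \eta^{\wp_1}_{L_{\wp_2}}$. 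Naturality in $X$ is immediate since each factor is natural and the isomorphism is natural. One should also check the normalization: for $\wp_1=\wp_2$ the map $L_{\wp_1}(\eta^{\wp_1}_X)=\eta^{\wp_1}_{L_{\wp_1}X}$ (these agree by idempotency), so $\nu_{\wp_1,\wp_1}$ is the identity. An alternative, perhaps cleaner, description: since $L_{\wp_1}X$ is $L_{\wp_1}$-local, it receives a unique map from $L_{\wp_2}X$ compatible with $\eta^{\wp_1}$; take $\nu_{\wp_2,\wp_1}$ to be that unique map. I would present both and use whichever makes the composition law transparent.

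Next comes functoriality: given $\wp_3\leq\wp_2\leq\wp_1$ I must show $\nu_{\wp_2,\wp_1}\circ\nu_{\wp_3,\wp_2}=\nu_{\wp_3,\wp_1}$. Here the universal-property description pays off: both sides are maps $L_{\wp_3}X\lra L_{\wp_1}X$ out of $L_{\wp_3}X$ into the $L_{\wp_1}$-local object $L_{\wp_1}X$, and one checks both are compatible with the unit $\eta^{\wp_1}_X$ precomposed with $\eta^{\wp_3}_X\colon X\lra L_{\wp_3}X$; uniqueness of such a map (which uses that $\eta^{\wp_1}_X$ is initial among maps from $X$ to $L_{\wp_1}$-local objects, and that $X\lra L_{\wp_3}X$ is an $L_{\wp_1}$-equivalence, itself a consequence of left absorbativity applied to $\wp_1\geq\wp_3$) forces them to coincide. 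Packaging this, the assignment $\wp\mapsto L_{\wp}$ with morphisms $\nu$ is a functor $\fX\lra[\C,\C]$. I expect the main obstacle to be exactly this verification that $X\lra L_{\wp_3}X$ is an $L_{\wp_1}$-local equivalence (so that uniqueness applies): one wants to know $L_{\wp_1}$ inverts $\eta^{\wp_3}$, which is precisely the hypothesis $L_{\wp_1}(\eta^{\wp_3}_X)$ is an isomorphism, so the argument closes, but one must be careful that "$L_{\wp_1}$-equivalence" and "$L_{\wp_1}$ inverts it" mean the same thing for an idempotent monad — true, but worth stating.

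Finally, part (ii) is formally dual: reversing all arrows, right absorbativity says $\eta^{\wp_1}\colon \Lambda_{\wp_2}\lraiso \Lambda_{\wp_1}\Lambda_{\wp_2}$, which (reading $\Lambda$'s as colocalizations, i.e.\ comonads, if one prefers, though here they are still idempotent monads so the statement is literally about the direction of the comparison) yields for $\wp_2\leq\wp_1$ a natural transformation $\Lambda_{\wp_1}\lra\Lambda_{\wp_2}$ by the mirror-image construction: $\Lambda_{\wp_1}X\xrightarrow{\eta^{\wp_2}_{\Lambda_{\wp_1}X}}\Lambda_{\wp_2}\Lambda_{\wp_1}X\xleftarrow[\cong]{\,\eta^{\wp_1}_{\Lambda_{\wp_2}X}\,}\Lambda_{\wp_2}X$ — wait, one must be careful which factor is the isomorphism; I would double-check signs here and set it up so that the composition law runs $\fX^{op}\lra[\C,\C]$. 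Rather than spell out the dual argument in full, I would simply note that replacing $\leq$ by $\geq$ throughout and swapping the roles of the two isomorphisms in the definition of absorbativity converts the proof of (i) verbatim into a proof of (ii), so that an inequality $\wp_2\leq\wp_1$ yields $\Lambda_{\wp_1}\lra\Lambda_{\wp_2}$ closed under composition.
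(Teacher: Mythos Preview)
The paper states this lemma without proof, so there is no argument to compare against; your proposal correctly supplies the details that the author left implicit. Your construction for (i) is the standard one: given $\wp_2\leq\wp_1$, use left absorbativity to invert $L_{\wp_1}(\eta^{\wp_2})$ and compose with $\eta^{\wp_1}_{L_{\wp_2}}$ to obtain $\nu_{\wp_2,\wp_1}\colon L_{\wp_2}\to L_{\wp_1}$. The verification of functoriality via the reflective-subcategory universal property of an idempotent monad is clean and correct, and you are right that the only subtlety is knowing that $\eta^{\wp_3}$ is an $L_{\wp_1}$-equivalence, which is exactly the absorbativity hypothesis for $\wp_1\geq\wp_3$.

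One small point to tidy: in your sketch of (ii) the displayed composite has a mismatch (the middle object should be $\Lambda_{\wp_1}\Lambda_{\wp_2}X$, reached from the left by $\Lambda_{\wp_1}(\eta^{\wp_2}_X)$ and from the right by the isomorphism $\eta^{\wp_1}_{\Lambda_{\wp_2}X}$ supplied by right absorbativity). You flag this yourself, and once corrected the dual argument goes through verbatim, yielding the functor $\fX^{op}\to[\C,\C]$ as claimed.
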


\begin{remark}
(i) Following Part (i) we will usually refer to a left absorbative 
 system  as a {\em dual  system of
  localizations}. 

The motivating example is the collection of localizations at a prime 
in commutative algebra. Writing $\wp_a$ for prime ideal in the 
algebraic sense, and $\wp_b$ for the associated Balmer prime, we have 
 $L_{\wp_b}M=M_{\wp_a}$. The variance is covariant in $\wp_b$.

We will usually use the letter $L$ for a dual system of 
localizations.

(ii) Following Part (ii) we refer to a right absorbative system  
as a {\em (direct) system of localizations}.
 
The motivating example is the collection of completions at a prime 
in commutative algebra. Writing $\wp_a$ for prime ideal in the 
algebraic sense, and $\wp_b$ for the associated Balmer prime, we have 
 $\Lambda_{\wp_b}M=M_{\wp_a}^{\wedge}$. The variance is contravariant in $\wp_b$.

We will usually use the letter $\Lambda$ for a direct system of 
localizations. 

(iii) The composite of idempotent localizations (such as
$A_{\wp}=L_{\wp}\Lambda_{\wp}$ or $\Lambda_{\wp}L_{\wp}$) need not be an
idempotent localization, but it does come equipped with a natural
transformation  $1\lra A_{\wp}$, which will be the essential input into
constructing the adelic cochain complex below. 


(iv) Given an object $M$ of $\C$, any coefficient system $\Lambda_{\bullet}$ of localizations defines a  coefficient system 
$$\Lambda_{\bullet}M: \fX^{op} \lra \C. $$
Similarly a  dual system $L_{\bullet}$ of localizations, defines a 
dual coefficient system 
$$L_{\bullet}M: \fX \lra \C. $$
\end{remark}

\subsection{The dual system from commutative algebra}
 The simplest type of example might come from taking
$\fX=\spec (R)$ with the Balmer partial order (i.e., the reverse of
the classical order), and defining the system of localizations  
$$L_{\wp}M:=M_{\wp}$$
to be classical localization. Localization $M_{\wp}$ simply
inverts the elements of the complement $\wp^c$ so that  if $\wp_1\geq
\wp_2$ in the Balmer order, 
we have $(M_{\wp_2})_{\wp_1}=M_{\wp_1}$.

\subsection{Adelic cochains and adelic cohomology}
To define adelic cohomology we need a coefficent system $M$ and an 
$\fX$-collection  $A$. In this subsection we define the adelic cochain complex $C^*_{ad}(\fX ;A, M)$ and the adelic cohomology 
$$H^*_{ad}(\fX ;A, M) :=H^*(C^*_{ad}(\fX ;A, M)). $$

\begin{defn}
\label{defn:adeliccochains}
For $s\geq 0$, the {\em adelic $s$-cochains} may be thought of as functions on $s$-simplices of the poset $\fX$:
$$C^{s}(\fX;A, M)= 
\prod_{\wp_0}A_{\wp_0}
\prod_{\wp_1<\wp_0,}A_{\wp_{1}} \prod_{\wp_2<\wp_1}\cdots 
\prod_{\wp_{s-1}<\wp_{s-2}}A_{\wp_{s-1}}\prod_{\wp_s<\wp_{s-1}}A_{\wp_s}M (\wp_s).$$
\end{defn}

The differential
$$\delta : C^{s}(\fX; A,M)\lra C^{s+1}(\fX; A,M).$$
is given as a sum $\delta =\sum_i(-1)^i \delta_i$, where
$\delta_i$ is based on omitting the $i$th term in an $(s+1)$-flag.  
In more detail, if  $i<s+1$, we define
$$M_{i+1}(\wp_i)=
\prod_{\wp_{i+1}<\wp_i }A_{\wp_{i+1}} \prod_{\wp_{i+2}<\wp_{i+1}}\cdots 
A_{\wp_{s-1}}\prod_{\wp_{s}<\wp_{s-1}}A_{\wp_{s}}\prod_{\wp_{s+1}<\wp_{s}}A_{\wp_{s+1}}M (\wp_{s+1}). $$
(this  is just  $C^{s-i}(\Lambda (\wp_i)^*; A,M)$ in the
previous notation, where $\Lambda (\wp_i)$ consists of specializations
of $\wp_i$ and the star indicates that $\wp_i$ itself is omitted). 
Then the map $\delta_i$ is simply given by taking 
$$M_{i+1}(\wp_i)\lra A_{\wp_i}M_{i+1}(\wp_i)$$
 at the $i$th spot, and then applying the same sequence of products and
localizations to both domain and codomain. 

If $i=s+1$ we take the map
$$M(\wp_s) \lra \prod_{\wp_{s+1}< \wp_s}M(\wp_{s+1})\lra \prod_{\wp_{s+1}< \wp_s}A_{\wp_{s+1}}M(\wp_{s+1})$$
with components $M(\wp_s)\lra M(\wp_{s+1})\lra A_{\wp_{s+1}}M(\wp_{s+1})$ given by the coefficient
system, and then apply $A_{\wp_s}$ and the same sequence of products and
localizations to both domain and codomain. 

To see we get a cochain complex we need only observe that the
composite of two $\delta_i$s depends only on the dimensions omitted. 
More precisely, for 
$$C^{s-2}\stackrel{\delta}\lra C^{s-1}\stackrel{\delta}\lra C^{s}$$ 
if the numbers omitted are $0\leq a<b\leq s$, then we may omit $a$ and $b$
in either order and we need to know that $\delta_a\delta_b=\delta_{b-1}\delta_a$. 

If $a<b<s$ then the verification is immediate from the naturality of
the transformations $\eta_{\wp}$, together with the categorical properties
of the product. 

If $b=s$ there are two cases. The simplest is when  $a<s-2$. Then the diagram 
$$\xymatrix{
A_{\wp_{a+1}}\cdots A_{\wp_{s-1}}M(\wp_{s-1}) \rto \dto&
A_{\wp_{a+1}}\cdots A_{\wp_{s-1}}A_{\wp_s}M(\wp_{s})  \dto \\
A_{\wp_a} A_{\wp_{a+1}}\cdots A_{\wp_{s-1}}M(\wp_{s-1}) \rto &
A_{\wp_a}A_{\wp_{a+1}}\cdots A_{\wp_{s-1}}A_{\wp_s}M(\wp_{s})
}$$
commutes since $\eta: 1\lra A_{\wp_a}$ is a natural
transformation. The required commutation then  follows from the categorical 
properties of the product. 
  
The case $b=s-1, a=s$ is the most complicated. We will abbreviate
$M(\wp_s)=M(s)$ and $A_{\wp_s}=A_s$ for readability. The following
diagram has $A_0A_1\ldots A_{n-2}$ applied to it. 
$$\xymatrix{
&M(n-2)\ar[dl] \ar[drr] \ar[ddd]&&\\
A_{n-1}M(n-2)\ar[drr]\ar[ddd]&&&A_nM(n-2)\ar[dl] \ar[ddd]\\
&&A_{n-1}A_nM(n-1)\ar[ddd]&\\
&M(n-1)\ar[dl] \ar[drr] \ar[ddd]&&\\
A_{n-1}M(n-1)\ar[drr]\ar[ddd]&&&A_nM(n-1)\ar[dl] \ar[ddd]\\
&&A_{n-1}A_nM(n-1)\ar[ddd]&\\
&M(n)\ar[dl] \ar[drr] &&\\
A_{n-1}M(n)\ar[drr]&&&A_nM(n)\ar[dl] \\
&&A_{n-1}A_nM(n)&}$$
The left and right faces commute since the unit for $A_{n-1}$ is a natural 
transformation.
The front and back faces commute since the unit for $A_{n}$ is a natural 
transformation.
The top and bottom faces commute because the unit for $A_{n-1}$ is a natural
transformation. 
The relevant square involves $M(n-2), A_{n-1}M(n-1), A_nM(n)$ and
$A_{n-1}A_nM(n)$.  The required commutation then  follows from the categorical 
properties of the product.

\subsection{Local meromorphic coefficients}
In this subsection we focus on the important example
where the $\fX$-collection $L$ is left absorbative (i.e., it is 
a dual system of localizations).

If the poset is finite, the products in the definition of the cochain
complex are finite. This means that all the localizations can be
collected  on the value 
$M(\wp_s)$  of the coefficient system:
$$L_{\wp_0}\cdots L_{\wp_s}M(\wp_s)=L_{\wp_0}M(\wp_s). $$
Actually, we need to deal  with infinite posets, so the localizations will usually not commute
with the products. Nonetheless, we think of $L_{\wp_0}$ as specifying
`permitted denominators in the completed stalk $M(\wp_s)$', and arrange
terminology accordingly. 

\begin{defn}
 An  {\em adelic coefficient system} is a local
coefficient system $M$, together with a dual system of localizations $L$.
\end{defn}

There are two degenerate cases. If $M$ is constant, we just have a
dual coefficient system $L_{\wp}M$. If the dual system of localizations 
all consist of the identity functor, we just have a  coefficient
system $M(\wp)$. 

\begin{example}
The motivating example of an adelic coefficient system  is  
$\fX=\spec (R)$ with the Balmer partial order (i.e., the reverse of
the classical order). We then take the completion coefficient
system $M (\wp)=M_{\wp}^{\wedge}$, and the dual system of localizations 
$$L_{\wp} N:=N_{\wp}$$
defined to be classical localization. 
 \end{example}

\section{Catenary posets $\protect \fX$}
If the poset $\fX$ is well behaved we can organize the adelic complex
into a cube. This can be helpful in examples, but it is an unnecessary detour in
the general development. 
\subsection{Dimensions}
We want to suppose that the poset $\fX$ is catenary in the sense that
for  any prime $\wp$
there is a bound on the length $s$ of flags
$$F=\left( \wp=\wp_0\supset \wp_1\supset \ldots \supset \wp_s \right)$$
of primes, and all maximal chains of this form starting at
$\wp$ have the same length. If the displayed chain is a maximal chain
starting at $\wp=\wp_0$ we will say that $\wp$ is of {\em dimension }
$s$. Thus closed points (Balmer-minimal primes) are of dimension $0$.


The dimension of $\fX$ is defined by 
$$r:=\dim (\fX):=\max\{ \dim (\wp)\st \wp \in \fX\}, $$
which may be finite or $\infty$.

We write $\Delta^r:=\{ 0, 1, \ldots, r\}$ if $r$ is finite and
$\Delta^{\infty}=\mathbb{N}$. 
The dimension function $\dim: \fX\lra \Delta^r $ is a
function of posets, and therefore induces a function on flags
$$\dim :\fX'\lra (\Delta^r)'. $$
When $r$ is finite, we will usually identify $ (\Delta^r)'$ (the set
of non-empty subsets of $\{ 0,
1, \ldots , r\}$)  with the  punctured $r$-cube by identifying a subset with its
characteristic function. 
More concretely any chain of the form displayed can be viewed as an
$s$-simplex of $\fX$, and we write
$$\dim (F)=(d_0>d_1>\ldots >d_s); $$
the flag is maximal if $d_i=s-i$ where $\dim (\wp)=s$.

\begin{example}
If $R$ is a catenary commutative Noetherian ring and $\wp_a$ is a  prime in the
algebraic sense, with associated Balmer prime $\wp_b$ then 
$\dim (\wp_b)=\dim (A/\wp_a).$  The dimension of $\fX=\spec (R)$ is
the usual Krull dimension of $R$. 

\end{example}

\begin{example}
If $G$ is a torus then 
$\dim (\wp_K)=\dim (K).$ 
In this case there is a unique prime $\wp_G$ which is Balmer-maximal (i.e.,
corresponding to an irreducible component), and infinitely
many closed points $\wp_F$ where $F$ is finite. The dimension of the
poset of subgroups is the dimension of $G$.
\end{example}

\subsection{Collecting cochains}
If $\fX$ is catenary, we may then divide $s$-simplices into those of different dimensions
 $d_0>d_1>\cdots >d_s$ of the vertices of the  $s$-simplex. 
Provided $\fX$ is finite dimensional, 
there are only finitely many
possible dimension vectors of  $s$-simplices. Since $A_{\wp}$ is
additive and the product is a categorical sum
$$A_{\wp}(M\times N)=A_{\wp}M \times A_{\wp}N,  $$
 we may then break cochains up by dimension. 

Thus 
$$C^s(\fX; A,M )=\prod_{d_0>d_1> \cdots >d_s} C^{d_0>d_1>\cdots
>d_s}(\fX ; A, M)$$
and
\begin{multline*}
C^{d_0>d_1>\cdots
>d_s}(\fX;A, M)= \\
\prod_{\dim \wp_0=d_0}A_{\wp_0}
\prod_{\wp_1<\wp_0, \dim  \wp_1=d_1}A_{\wp_{1}}\cdots 
\prod_{\wp_{s-1}<\wp_{s-2}, \dim
\wp_s=d_{s-1}}A_{\wp_{s-1}}\prod_{\wp_s<\wp_{s-1}, \dim
\wp_s=d_s}A_{\wp_s}M (\wp_s). 
\end{multline*}
Since the dimension of a face of a flag is the corresponding face of
its dimension, the functions $\delta_i$ are compatible with this
decomposition. 

Writing $\bd=(d_0> \cdots >d_s)$, thought of as a face of $\Delta^r$,  
 we may think of 
$$\bd \mapsto C^{\bd}(\fX; A,M)$$
as a dual coefficient system on the subdivision of the simplex
$\Delta^r$. We will often display it as a diagram on the punctured $(r+1)$-cube.
The   cochain complex $C^*(\fX; A,M)$ is  obtained
from the diagram by totalizing. 

\section{The classical Hasse square}
We suppose given a catenary Noetherian commutative ring $R$.
There are two interesting points here. The first is the straighforward
utility: we will show (Theorem \ref{thm:classicaladelic}) that the adelic cohomology is entirely in degree 0 where it is equal to the ring
 (this corresponds to the fact that the classical Hasse square is both
 a  pullback and a pushout). 

Deferring discussion of coefficients,  we have
$$H^*_{ad}(\spec (R); L, M)=H^0_{ad}(\spec (R); L,M)=R. $$
The second striking thing is that there are numerous different choices
of coefficients $L,M$ for which this is true. We could crudely say
that they are all based on localization and completion at primes
$\wp$. Our preferred variation is that the input
is the rings $L_{\wp}\Lambda_{\wp}R=(R_{\wp}^{\wedge})_{\wp}$ and the
Beilinson-Parshin approach \cite{Huber} is based on 
the rings $\Lambda_{\wp}L_{\wp}R=(R_{\wp})_{\wp}^{\wedge}$. Even
within these two categories there is some variation in the chain
complex. 

\subsection{Completion}
The dual system of localizations is $L_{\wp}M=M_{\wp}$ but there are several 
variations on what is done with it. In this subsection we consider the
choice of whether $\Lambda_{\wp}R=R_{\wp}^{\wedge}$ (which we view as
the true adelic approach) or $\Lambda_{\wp}=id$ for non-closed points.

This binary choice can each be further multiplied. 
We may divide the poset 
$(\Delta^r)'$ into an initial part $I$ and the complementary terminal 
part $J$, where there are no maps from a point of $J$ to a point of $I$. 
Then  $\Lambda'_{\wp}=id$ at points of $J$ and 
$\Lambda'_{\wp}M=M_{\wp}^{\wedge}$ for points of $I$. 
The theory that has been called `isotropic' in \cite{tnqcore} is 
the one in which $I$ consists of the finite subgroups, and we call it `local' in the present context. We
restrict the use of $\Lambda'$ to this case. 

\subsection{Products}
The second choice is about the size of the products. 
At dimension $d_0> \cdots >d_s$,  if $d_s=0$ and 
for a fixed prime $\wp_{s-1}$ of dimension $d_{s-1}$ the choice is 
whether we   use  
$$\prod_{\fm\leq \wp_{s-1}}R_{\fm}^{\wedge} \mbox{ or }
\prod_{\mathrm{all}\; \; \fm}R_{\fm}^{\wedge}. $$
The first follows Definition \ref{defn:adeliccochains}, which is
formally most natural and most geometric. The second is what 
is used in equivariant topology \cite{tnqcore}, where it was forced by
the need to have a commutative ring spectrum at that point.  

Again this binary choice may be further multiplied. We could  replace the product over
primes contained in $\wp_{i-1}$ by a larger product even when $d_s\neq 0$, 
provided this is compatible with the maps. We will not attempt to axiomatize this in
the absence of applications.  

\subsection{Beilinson-Parshin adeles}
The description in \cite[Proposition 2.1.1]{Huber} is inductive, but it amounts to taking 
the  coefficient system $M(\wp)=R$ and the system of 
localizations $\Lambda_{\wp}L_{\wp}$: 
$$H^*_{BP}(R)=H^*_{ad}(\spec(R); \Lambda L, R). $$

\subsection{The statement}
In all of the situations identified above, the adelic cohomology
recovers the ring in degree 0. 

\begin{thm}
\label{thm:classicaladelic}
If $R$ is a commutative Noetherian catenary ring and we take one  the
three coefficient systems $(L, \Lambda R)$, 
$(L, \Lambda' R)$, $(\Lambda L, R)$ 
 then there is no
higher cohomology and $H^0$ is the original ring: 
$$H_{ad}^*(\spec(R)) =R. $$
This also holds when all products involving maximal ideals involve all
maximal ideals. 
\end{thm}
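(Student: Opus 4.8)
The statement asserts that for a catenary Noetherian ring $R$, with any of the three coefficient setups, the adelic cochain complex $C^*_{ad}(\spec(R); A, M)$ is a resolution of $R$ concentrated in degree $0$. My approach is to prove this by induction on $\dim R = r$, using the decomposition of $C^*$ by dimension vectors $\bd = (d_0 > \cdots > d_s)$ from the ``Collecting cochains'' subsection, which organizes the complex as the totalization of a dual coefficient system on the punctured $(r+1)$-cube. The key structural observation is that this cube is built by gluing: the ``top face'' of the cube (dimension vectors with $d_0 = r$) contributes the terms $L_{\wp_0} = (-)_{\wp_0}$ over the (finitely many, by catenariness and Noetherianness) generic points $\wp_0$ of dimension $r$, followed by products over chains below them; and the complement is exactly the adelic complex for the closed subscheme structure below, i.e. a sum of complexes of the same shape for rings of dimension $< r$. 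So the totalization sits in a cofiber/mapping-cone sequence relating $C^*_{ad}$ for $R$ to the dimension-$r$ localized piece and to lower-dimensional adelic complexes.

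\textbf{The inductive step in detail.} The base case $r = 0$ is immediate: $\spec(R)$ is discrete, $C^*$ is concentrated in degree $0$ and equals $\prod_{\fm} \Lambda_{\fm} R$; when $\Lambda_{\fm} R = R^\wedge_{\fm}$ this is $\prod_{\fm} R^\wedge_{\fm} = R$ because $R$ is Artinian (a finite product of its localizations, which already equal their completions). For the inductive step, I would fix a prime $\wp$ of dimension $r$ and analyze the ``link'' of $\wp$: the flags starting at $\wp$ form (after removing $\wp$) the subdivided cube for $\spec(R/\wp)$-type data, localized at $\wp$. The decisive input is the \emph{classical Hasse/Mayer--Vietoris square} for passing between dimension $r$ and dimension $\leq r-1$: for $R$ local of dimension $r$ with maximal ideal... more precisely, the square
\[
\xymatrix{
R \ar[r] \ar[d] & \prod_{\height \wp = 1}(R^\wedge_{\wp})\ar[d]\\
\big(\prod ?\big) \ar[r] & \cdots
}
\]
is both a pullback and a pushout of $R$-modules; iterating this over the poset of primes is exactly what the adelic cube encodes. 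So the argument reduces to: (a) each such elementary square is homotopy cartesian and homotopy cocartesian (the classical fact that $0 \to R \to R^\wedge_\wp \times R[1/f] \to R^\wedge_\wp[1/f]\to 0$ for suitable $f$, suitably interpreted, is exact), and (b) totalizing a cube all of whose elementary squares have this property, with the correct variance and the correct ``permitted denominators'' bookkeeping $L_{\wp_0}\cdots L_{\wp_s} M(\wp_s)$, yields the limit term $R$ with no derived contributions. For the $(\Lambda L, R)$ (Beilinson--Parshin) variant I would invoke the inductive description in \cite[Prop. 2.1.1]{Huber} directly; for the $(L,\Lambda R)$ and $(L, \Lambda' R)$ variants I would check that replacing $R^\wedge_\wp$ by $R_\wp$ at non-closed points (resp. by the identity on the terminal part $J$) does not change the totalization up to quasi-isomorphism, because the discrepancy is itself the totalization of an acyclic subcomplex — the non-closed-point completions are ``filled in'' by their own lower-dimensional adelic resolutions.

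\textbf{Handling the ``all maximal ideals'' clause.} The final sentence says we may enlarge every product $\prod_{\fm \leq \wp_{s-1}} R^\wedge_\fm$ to $\prod_{\text{all }\fm} R^\wedge_\fm$. I would show this enlargement changes $C^*_{ad}$ only by an acyclic complex: the ``extra'' factors, indexed by pairs $(\fm, \wp_{s-1})$ with $\fm \not\leq \wp_{s-1}$, fit into their own subcomplex which, for each fixed $\fm$, is the flag complex of the subposet of primes \emph{not} specializing to $\fm$ with constant coefficients $R^\wedge_\fm$ — and such a complex is contractible because that subposet has a terminal-type / cone structure relative to the full poset (concretely, the difference complex splits off as $\prod_\fm$ of cochains on a poset whose nerve is collapsible). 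Thus the enlarged complex is quasi-isomorphic to the original, and the theorem for it follows from the theorem for the geometric version. \emph{The main obstacle} I anticipate is precisely step (b): making the bookkeeping of iterated localizations on infinite posets rigorous — the localizations $L_{\wp_0}$ do not commute with the infinite products, so one cannot naively collapse $L_{\wp_0}\cdots L_{\wp_s}M(\wp_s)$ to $L_{\wp_0}M(\wp_s)$, and one must argue instead that the relevant colimits/products interchange up to quasi-isomorphism at the level of the totalized complex (using that each individual stalk computation is a finite one, and a careful filtration-by-dimension argument controls the passage to the limit). This is where catenariness and the finiteness of the dimension (so that the cube is finite even though $\fX$ is infinite) are essential.
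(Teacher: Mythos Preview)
Your approach differs substantially from the paper's. The paper does not argue directly with the underived adelic chain complex. Instead it works in the derived category $\sfD(R)$: it shows (Section~\ref{sec:adeliccube}) that the cube $\Rad$, built with \emph{derived} completions, is a homotopy pullback, and then invokes \cite{GMcomp} to identify derived completion with ordinary completion for Noetherian $R$, so that the homotopy-pullback statement becomes the desired acyclicity. The inductive mechanism is not induction on $\dim R$ but on $\dim(\fp)$ for individual primes: one tensors the whole cube with the compact Koszul object $K_\fp$, which (being compact) commutes with all the infinite products and (since $K_\fp \tensor_R R_\fq \simeq 0$ unless $\fq \leq \fp$) collapses the cube to the sub-cube $\Rad(\fp)$ indexed by primes $\leq \fp$. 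The Cubical Reduction Principle then peels off the top-dimensional coordinate, and the fibre is supported on primes of strictly smaller dimension, so induction applies.

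Your proposal is missing both of these ingredients. First, you never address the derived-versus-underived completion issue: the ``elementary Hasse squares'' you invoke are naturally \emph{homotopy} pullbacks in $\sfD(R)$, and an extra argument (precisely \cite{GMcomp}) is needed to rule out higher derived-completion terms when passing to the underived complex with ordinary $R^\wedge_\fp$. Second, the obstacle you yourself flag in step~(b) --- localizations failing to commute with infinite products --- is exactly what tensoring with the compact $K_\fp$ dissolves in one stroke; your alternative (``each individual stalk computation is a finite one, and a careful filtration-by-dimension argument'') gestures toward the same idea but supplies no mechanism. Finally, your claim that the complement of the top face is ``a sum of complexes of the same shape for rings of dimension $< r$'' is not correct as stated: the lower strata still consist of primes of $R$ with their $R$-completions and $R$-localizations, and there is no single smaller ring whose adelic complex they form; the paper's prime-by-prime reduction via $K_\fp$ avoids having to make any such identification.
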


\begin{proof}
For Beilinson-Parshin adeles, this is \cite[Theorem 4.1.1]{Huber}, but
our proof in the other cases gives an alternative proof. 

Our method is to establish that in the derived category, the ring 
 $R$ is the homotopy pullback of a suitable cube of rings following
 the method of  \cite{tnqcore} (see Section \ref{sec:adeliccube}):
this is done in Section \ref{sec:adeliccube}. We then apply  the results of
\cite{GMcomp} to see that the 0th left derived functor of completion for 
the Noetherian ring $R$ is the ordinary completion and the higher
derived functors vanish; it then follows that the derived completion
coincides with the ordinary completion. 
\end{proof}

\section{Subdivision and constant coefficients}
We observe here that the adelic cohomology of  a constant coefficient
system $M$, can sometimes be calculated from a much smaller cochain
complex. Indeed, the case that we will use is when $\fX$ is itself a
simplicial complex, and the result is simply that the cohomology of 
a complex and its subdivision agree. 

\subsection{Simplicial cohomology}
By the time we have explained the statement, the reader will agree
with it, but there are several notational delicacies worth clarifying
on the way. 

We are given an object $M$ in $\C$ (defining a constant coefficient
system) and a dual system   $L$ of localizations. This gives 
a dual coefficient system  $LM : \fX\lra \C$, and a corresponding dual
coefficient system $LM_*: \fX'\lra \C$ on the flag complex. If $\fX$ is itself a simplicial
complex, the simplicial cohomology of $\fX$
is defined by 
$$
H_{simp}^*(\fX ; LM)=H^*\left( 
\prod_{\sigma_0 \in \fX_0} L_{\sigma_0}M
\lra \prod_{\sigma_1 \in \fX_1} L_{\sigma_1}M
\lra \prod_{\sigma_2 \in \fX_2} L_{\sigma_2}M\lra \cdots
\right)$$

\begin{lemma}
\label{lem:adelicconstcoeffs}
If the partially ordered set $\fX$ is a finite simplicial complex then 
$$H^*_{ad}(\fX; L, M) \cong H_{simp}^*(\fX; LM)$$ 
\end{lemma}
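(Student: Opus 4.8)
The plan is to compare the two cochain complexes term by term, exploiting the fact that $\fX$ being a simplicial complex means a flag $\wp_0 > \wp_1 > \cdots > \wp_s$ in $\fX$ is the same thing as a strictly increasing chain of faces of $\fX$, and that left absorbativity lets us collapse the tower of localizations. First I would record the key simplification: since $L$ is a dual system of localizations (left absorbative) and $\fX$ is finite, in any term
$$L_{\wp_0} L_{\wp_1} \cdots L_{\wp_s} M$$
indexed by a flag $\wp_0 > \wp_1 > \cdots > \wp_s$, the finiteness of the products lets the localizations be pulled onto the innermost object, and left absorbativity (applied repeatedly to the inequalities $\wp_0 \geq \wp_1 \geq \cdots \geq \wp_s$) gives $L_{\wp_0} \cdots L_{\wp_s} M \cong L_{\wp_0} M$. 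Hence
$$C^s_{ad}(\fX; L, M) \cong \prod_{\wp_0 > \cdots > \wp_s} L_{\wp_0} M = C^s(\fX'; (LM)^*),$$
the simplicial cochain complex of the flag complex $\fX'$ with coefficients in the dual coefficient system $(LM)^*$ obtained from $LM: \fX \to \C$ (this is exactly the situation of Remark (i) following the definition of $C^*_{fl}$). I would check that under this identification the differential $\delta = \sum_i (-1)^i \delta_i$ of the adelic complex matches the simplicial differential of $C^*(\fX'; (LM)^*)$: for $i < s+1$ the map $\delta_i$ is literally "delete the $i$th vertex", and for $i = s+1$ the composite $M(\wp_s) \to M(\wp_{s+1}) \to L_{\wp_{s+1}} M(\wp_{s+1})$ reduces, since $M$ is constant, to the coefficient map $L_{\wp_s} M \to L_{\wp_{s+1}} M$ of the dual coefficient system $LM$, which is precisely the last face map in the simplicial complex of $\fX'$.

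Next I would invoke the classical fact that subdivision induces an isomorphism on cohomology. Precisely, by Remark (i) after the construction of $C^*_{fl}$, the subdivision map
$$Sbd: C^*(\fX; LM) \lra C^*(\fX'; (LM)^*)$$
is a chain homotopy equivalence, where the left-hand side is the simplicial cochain complex of $\fX$ itself with the dual coefficient system $LM$ — that is, exactly the complex $\prod_{\sigma_0 \in \fX_0} L_{\sigma_0} M \to \prod_{\sigma_1 \in \fX_1} L_{\sigma_1} M \to \cdots$ computing $H^*_{simp}(\fX; LM)$. Combining the term-by-term identification of the previous paragraph with this chain homotopy equivalence yields
$$H^*_{ad}(\fX; L, M) \cong H^*(C^*(\fX'; (LM)^*)) \cong H^*_{simp}(\fX; LM),$$
as claimed.

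The step I expect to be the main obstacle — or at least the one requiring the most care — is verifying that the canonical isomorphisms $L_{\wp_0} \cdots L_{\wp_s} M \cong L_{\wp_0} M$ assemble \emph{coherently} into a chain isomorphism, i.e. that they commute with all the face maps $\delta_i$, including the awkward last one $\delta_{s+1}$. One must be careful that the natural transformation $1 \to L_{\wp}$ used in $\delta_{s+1}$, together with the left-absorbative isomorphisms, is compatible with the collapsing; the naturality arguments are exactly of the same flavour as the $\delta^2 = 0$ verification carried out in the construction of the adelic complex, so I would model the check on that. A minor subtlety worth flagging is that finiteness of $\fX$ is genuinely used twice: once to know the products are finite so localizations pass through them, and implicitly to keep the bookkeeping of dimension vectors manageable; without it the identification $L_{\wp_0}\cdots L_{\wp_s}M \cong L_{\wp_0}M$ fails and the lemma as stated need not hold.
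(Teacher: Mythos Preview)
Your approach is essentially the same as the paper's: collapse the iterated localizations using finiteness and left absorbativity to identify the adelic cochains with $C^*(\fX'; (LM)_*)$, then invoke invariance of simplicial cohomology under subdivision. One small notational slip: the induced system on $\fX'$ that takes the flag $\wp_0>\cdots>\wp_s$ to $L_{\wp_0}M$ is $(LM)_*$ (same variance, first term), not $(LM)^*$; relatedly, the Remark you cite concerns constant coefficients, so for the subdivision step you should appeal to the general invariance of simplicial cohomology under subdivision rather than to that specific remark.
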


\begin{proof}
Using the fact that localizations commute with finite products, the adelic $s$-cochains are 
$$\prod_{\sigma_0}L_{\sigma_0}
\prod_{\sigma_1\subset \sigma_0 }L_{\sigma_1}
\prod_{\sigma_2\subset \sigma_1 }L_{\sigma_2}
\cdots 
\prod_{\sigma_s\subset \sigma_{s-1} }L_{\sigma_s}M
=\prod_{\sigma_0\supset \sigma_1\supset \cdots \supset \sigma_s
}L_{\sigma_0}M. $$
The cohomology of this is the cohomology of the {\em subdivided} complex $\fX'$:
$$H^*_{ad}(\fX; L, M)=H^*_{simp}(\fX'; LM_*)$$
Finally, simplicial homology is invariant under passage to
subdivisions. 
$$H_{simp}^*(\fX; LM)\cong H_{simp}^*(\fX'; LM_*). $$
\end{proof}

\subsection{The Cech complex}
One instance of Lemma \ref{lem:adelicconstcoeffs}
 is very important to us. 

 We suppose given a ring $R$ consider
some elements $x_a$ as the subscript $a$ runs through a   set $A$. Now
we take $\fX=\Delta(A)$
to be the partially ordered set of non-empty finite subsets of $A$,
and write $\fX^+$ for the poset of all finite subsets. If $A$ is a
finite set with $r$ elements $\fX^+$ is the $r$-cube and $\fX=\Delta
(A)$ is the punctured $r$-cube. 

An $R$-module  $M$ defines a constant coefficient system on $\fX$. 
It is then natural to consider the dual coefficient system $M_{A}$ on $\fX^+$, whose value on a finite  subset 
$\sigma$  of $A$ is  the localization $M[1/x_{\sigma}]$, 
where $x_{\sigma}=\prod_{a \in \sigma}x_a$.  If $A$ is
finite with $r$ elements,  $\fX^+$ is an $r$-cube, and the diagram $M$
is commutative so we may totalize
this diagram to get the usual stable Koszul  complex.

A simple example will make the result clear, and explain our care with
the empty set. 

\begin{example}
If 
$A$ has just  $2$ elements, $\fX=\{ \{x\}, \{y\}, \{x,y\}\}$ has two vertices and one edge, and
the dual coefficient system $M_A^+$ on $\fX^+$ is 
$$\diagram 
M \rto \dto & M[\frac{1}{x}]\dto \\
M [\frac{1}{y}]\rto & M[\frac{1}{xy}]
\enddiagram$$
which is totalized to 
$$M\lra M [\frac{1}{x}]\oplus M[\frac{1}{y}]\lra M[\frac{1}{xy}], $$
whose simplicial cohomology 
is (by definition) the local cohomology 
$$H^*_{simp}(\Delta (A)^+; M_A^+)=H^*_{(x,y)}(R;M).$$
 If we
restrict attention to $\fX$, then we omit the top left entry and
regrade so that we obtain the Cech cohomology 
$$H^*_{simp}(\Delta (A); M_A)=\check{C}^*_{(x,y)}(R; M).$$ 

For the adelic cochain complex, we note $\fX'$ has three vertices
(the two $\{x\}, \{y\}$  of dimension 0 correspond to the vertices of
$\fX$,  and the one $\{x,y\}$ of dimension 1 corresponds to the whole of $\fX$), and
two edges ($\{x,y\}\supset \{x\}$ and $\{x,y\}\supset \{y\}$). The dual
coefficient system $M_A^+$ on $\fX^+$ induces $(M_A^+)_*$ on
$(\fX')^+$.  Using the
decomposition by  dimension vectors, we obtain the augmented complex
$$\diagram 
M \rto \dto & M[\frac{1}{xy}]\dto \\
M [\frac{1}{x}]\oplus M[\frac{1}{y}]\rto &
M [\frac{1}{xy}]\oplus M[\frac{1}{xy}]
\enddiagram$$
Lemma \ref{lem:adelicconstcoeffs} shows the cohomology of the
complexes obtained from the two displayed squares are the same, 
and in fact it is easy to construct a homotopy equivalence. 
\end{example}

For a general set $A$ of variables in a Noetherian ring $R$, Lemma \ref{lem:adelicconstcoeffs} 
 gives the following calculation. 
\begin{cor}
With $M$ an $R$-module and $L_A$ being the localization away from a set
$A$ of elements of $R$, we have 
$$H^*_{ad}(\Delta (A)^+; L_A, M)\cong H_{(x_a\st a\in A)}^*(R; M)$$
$$H^*_{ad}(\Delta (A); L_A, M)\cong \check{C}_{(x_a\st a\in A)}^*(R; M)$$
\end{cor}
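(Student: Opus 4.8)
The plan is to derive the Corollary directly from Lemma~\ref{lem:adelicconstcoeffs} together with the explicit identification of the subdivided simplicial complex of flags with the combinatorial data encoding local (resp.\ \v{C}ech) cohomology. The only extra wrinkle compared with the lemma as stated is that $\Delta(A)^+$ and $\Delta(A)$ need not be finite: the ground ring $R$ is Noetherian, but the index set $A$ may be infinite. So the first step is to reduce to the finite case. Given a cocycle in the adelic complex, it involves only finitely many of the localized rings $R[1/x_\sigma]$ and the finitely many elements $x_a$ occurring in those $\sigma$; since $R$ is Noetherian, local cohomology (and \v{C}ech cohomology) with respect to a set of elements depends only on the radical of the ideal they generate, and both the adelic and the (co)simplicial constructions are colimits over the finite subsets $A_0 \subseteq A$ in the evident way. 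Thus it suffices to check the statement when $A$ is finite, and for finite $A$ the localizations $L_{A_0}$ commute with the finite products appearing in the cochain complex, so Lemma~\ref{lem:adelicconstcoeffs} applies verbatim.

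Second, with $A$ finite I would unwind what Lemma~\ref{lem:adelicconstcoeffs} gives in this case. Taking $\fX = \Delta(A)^+$ (the full $r$-cube) with the constant coefficient system $M$ and the dual system of localizations $L_{A}$ whose value at $\sigma$ is $M[1/x_\sigma]$, the lemma identifies $H^*_{ad}(\Delta(A)^+; L_A, M)$ with the simplicial cohomology of the subdivided complex with coefficients in $(L_A M)_*$. Totalizing the $r$-cube $\sigma \mapsto M[1/x_\sigma]$ is, by the discussion just before the Corollary (and the worked $2$-variable example), exactly the stable Koszul complex, whose cohomology is $H^*_{(x_a : a\in A)}(R; M)$ by definition. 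This handles the first displayed isomorphism. For the second, one repeats the argument with $\fX = \Delta(A)$, the \emph{punctured} cube: deleting the initial vertex $\emptyset$ and regrading turns the stable Koszul complex into the \v{C}ech complex $\check{C}^*_{(x_a : a\in A)}(R; M)$, again by definition, and Lemma~\ref{lem:adelicconstcoeffs} together with subdivision-invariance of simplicial cohomology yields the claimed identification.

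Third, I would pass back from finite $A$ to arbitrary $A$. Both sides of each asserted isomorphism are, by inspection of the constructions, filtered colimits over finite subsets $A_0\subseteq A$ of the corresponding finite-$A_0$ objects: the adelic $s$-cochains $\prod_{\sigma_0\supset\cdots\supset\sigma_s} L_{\sigma_0}M$ over $\Delta(A)^{(+)}$ are the colimit of those over $\Delta(A_0)^{(+)}$ (a finite subset of $A$ meets only finitely many simplices of any fixed cocycle), and likewise the stable Koszul, resp.\ \v{C}ech, complexes for $(x_a : a\in A)$ are the colimits of those for $(x_a : a\in A_0)$. Since filtered colimits are exact, taking cohomology commutes with this colimit, and the finite-case isomorphisms assemble to the general statement.

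The step I expect to be the main obstacle is the passage between infinite and finite $A$: one must be careful that the localizations still commute with the products appearing in the adelic complex (they only do so for \emph{finite} products, which is precisely why the reduction to finite $A_0$ is needed), and one must check that the two colimit descriptions are genuinely compatible — i.e.\ that the transition maps in the colimit over finite $A_0$ agree on the adelic side and the \v{C}ech/Koszul side, so that the finite-case isomorphisms are natural and glue. Everything else is a direct appeal to Lemma~\ref{lem:adelicconstcoeffs}, to the definitions of local and \v{C}ech cohomology recalled just above, and to exactness of filtered colimits.
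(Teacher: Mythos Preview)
For finite $A$ your argument coincides with the paper's: both invoke Lemma~\ref{lem:adelicconstcoeffs} to identify the adelic complex with the simplicial cochains of the (subdivided) cube, which is by definition the stable Koszul (respectively \v{C}ech) complex.

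The gap is in your passage to infinite $A$. The adelic $s$-cochains for $\Delta(A)^{(+)}$ are a \emph{product} over all $s$-flags of finite subsets of $A$; when $A$ is infinite this is a genuinely infinite product, not a filtered colimit of the finite-$A_0$ cochain groups. Your justification (``a finite subset of $A$ meets only finitely many simplices of any fixed cocycle'') is backwards: a cochain is an element of this product and will in general have nonzero component at infinitely many flags, so there is no finiteness to exploit. Moreover, extension-by-zero from flags in $\Delta(A_0)'$ to flags in $\Delta(A)'$ is not a chain map, since the face map $\delta_0$ deleting the top simplex $\sigma_0$ of a flag $(\sigma_0\supset \sigma_1\supset\cdots)$ can land in $\Delta(A_0)'$ even when $\sigma_0\not\subset A_0$. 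So neither the cochain groups nor the complex exhibit the colimit description you assert.

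The paper handles infinite $A$ differently, leaning on the Noetherian hypothesis rather than a colimit description: it adds elements one at a time, producing a directed system of comparison squares between the adelic and local-cohomology sides for finite subsets, and argues that because $R$ is Noetherian the transition maps in this system are eventually isomorphisms (the radical of $(x_a:a\in A_0)$ stabilizes). The independence of the chosen chain is then checked by comparing $A_1$ and $A_2$ through $A_1\amalg A_2$. This is a stabilization argument at the level of finite subsets, not a filtered-colimit identification of the infinite complex; your approach would need a genuinely different construction of the transition maps to be made to work.
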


Note that this also shows the adelic cohomology only depends on the radical of the ideal
$I(A)=(x_a\st a\in A)$. 

\begin{proof}
We will discuss the local cohomology case for definiteness, but a precisely
similar argument applies to \v{C}ech cohomology. 

The case when $A$ is finite  is given by Lemma
\ref{lem:adelicconstcoeffs} together applied to the adelic coefficient
system $(L_A,M)$.  

If we add one element to $A$ to form $B=A\cup \{b\}$, we obtain a
commutative square 
$$\xymatrix{
H^*_{ad}(\Delta (A)^+; L_A, M) \rto^{\cong}\dto&H_{(x_a\st a\in A)}^*(R;
M)\dto \\
H^*_{ad}(\Delta (A)^+; L_A, M) \rto^{\cong}&H_{(x_a\st a\in A)}^*(R;
M)
}$$
Because the ring is Noetherian, any such chain of adding elements
eventually gives vertical isomorphisms.  The choice of chain is not
important since any two particular collections
of variables $A_1, A_2$ may be compared to $A_1\coprod A_2$. 
\end{proof}

\section{Varying the ambient category}
\label{sec:varcat}
The definition of adelic cohomology above fails to cover some
important examples, so we introduce a more flexible context. 

\subsection{Arrivals from homotopy theory}
One motivation for this work is that homotopy theory provides a rich
source of examples. As described in \cite{adelicmodels} we may obtain
a coefficient system $M(\wp)$ and a dual system of localizations
$A_{\wp}$ in a category $\C$ of homotopical origin. This means that we can apply homotopy (or some other homology theory) to
obtain a coefficient system $\pi_*(M(\wp))$ in some abelian category
$\cA$. However, there is no reason to expect  $\pi_*(L_{\fq}M(\wp))$ to be a
functor of $\pi_*(M(\wp))$, so we do not automatically get a
dual  system of localizations.

Nonetheless, under catenary and finite dimensionality assumptions we
obtain an  $(r+1)$-cube in $\C$, and applying homotopy gives a cube in
$\cA$, from which we may obtain a chain complex $C^*_{ad}(\fX; \pi_*, 
L, M)$ with 
$$C^{\boldd}_{ad}(\fX; \pi_*, 
L, M)=\pi_*(C^{\boldd}_{ad}(\fX; L, M))=\pi_*(
\prod_{\wp_0}L_{\wp_0}\prod_{\wp_1}L_{\wp_1}\cdots
\prod_{\wp_s}L_{\wp_s}M(\wp_s)). $$

We would like to consider cases in which this chain complex is an
example of the adelic complex of Definition \ref{defn:adeliccochains}. Of
course one example is that from commutative algebra, but many
examples do not fit this pattern. On the other hand, a small variation
will cover some more examples, and it is the purpose of this section
to introduce the variation.

\subsection{Relative localization}
It may happen that $\pi_*(M(\wp))$ takes values in an abelian category 
$\cA (\wp)$ depending on $\wp$, and that in that context 
there is an algebraic  localization reflecting the homotopical one. 

This applies to the examples from equivariant topology. In fact, there is a homotopy
category level version which applies for $G$-spectra, but when one
takes homotopy groups to move into algebra one needs to take account
of the fact that at a subgroup $K$ we get a module over $H^*(BG/K)$
(i.e., the ambient category varies with the prime). In line with the
algebraic focus of this paper we will restrict to variation
controlled by a coefficient system $R$ of rings. 

Thus we assume our  dual system of localizations is monoidal so that
if $R$ is a ring then the dual coefficient system  $R(\wp)$ gives a
diagram of rings. This means that each localization
$L_{\wp}$ needs to have a version for $R(\fq)$-modules for all $\fq\leq
\wp$. 

\begin{defn} A {\em dual system of relative localizations} is a left
  absorbative system of functors
$$L_{\wp_1/\wp_2}:  \mbox{$R(\wp_2)$-modules}\lra
\mbox{$R(\wp_2)$-modules}$$
(where $\wp_1\geq \wp_2$), which are  transitive in the sense that
when $\wp_1\geq \wp_2\geq \wp_3$, the diagram 
$$\xymatrix{
 \mbox{$R(\wp_2)$-modules} \ar[r]^{L_{\wp_1/\wp_2}} \ar[dd]^{R_*}
&\mbox{$R(\wp_2)$-modules} \ar[d]^{R_*}\\
&\mbox{$R(\wp_3)$-modules} 
\ar[d]^{L_{\wp_2/\wp_3}} \\
\mbox{$R(\wp_3)$-modules} \ar[r]^{L_{\wp_1/\wp_3}}
&\mbox{$R(\wp_3)$-modules}
}$$
commutes. 
\end{defn}

We will see that the equivariant examples satisfy this transitivity
condition. Unfortunately this condition by itself does not seem to be
enough to complete the algebraic construction, so we will  restrict
further.

\subsection{Multiplicative systems}
Once again, we return to the equivariant setting for motivation. 

For a torus $G$ the complex  representations of $G/K$ are the
representations  $V$ of $G$ which are $K$-fixed (i.e.,  $V^K=V$). Consider 
 the  $H$-essential representations of $G/K$:
$$\Rep (G/K)_{H/K}:=\{ V\in \Rep (G/K) \st V^H=0\}.  $$
Now suppose we have a dual coefficient system $R(K)$ of rings
and Euler class functions
$$e: \Rep (G/K)\lra R(K)$$
which are multiplicative in the sense that $e(V\oplus W)=e(V)e(W)$,
and compatible with the dual coefficient system in the sense that
$$e(\infl_{G/K}^{G/L}V)=R_* e(V). $$
Now we write
$$\cE_{H/K}:=\{ e(V)\in R(K) \st V^H=0\}.  $$

\begin{remark}
(i) If $L<K$ then $V^L=0$ implies $V^K=0$ so that $\cE_L\subseteq \cE_K$,  and localization is transitive.  

(ii) If $V$ is an arbitrary representation of $G$ then 
$V=V^K\oplus V'$ with $(V')^K=0$ so that the multiplicative set of
Euler classes of $H$-essential representations of various subgroups is partially
generated by inflations:  
$$\cE_{H/1}=\langle R_*\cE_{H/K}, \cE_{K/1}\rangle . $$
\end{remark}

Abstracting this example slightly we reach the definition. 

\begin{defn}
A {\em relative system of Euler classes} for a coefficient system of
rings is specified by one
multiplicative set $\cE_{\wp_1/\wp_2}$ in $R(\wp_2)$ whenever $\wp_1 \geq \wp_2$
so that 
$$\cE_{\wp_0/\wp_2}=\langle R_* \cE_{\wp_0/\wp_1}, \cE_{\wp_1/\wp_2}\rangle . $$
\end{defn}

An any relative system of Euler classes gives a dual relative system of
localizations by taking 
$$L_{\wp_1/\wp_2}=\cEi_{\wp_1/\wp_2}. $$

This gives a sufficiently general framework that we can cover the equivariant cases by our machinery. 

\subsubsection{The dual system for all closed subgroups of a torus}
\label{subsubsec:egtorus}
 In the toral example $\fX_a$ with all subgroups,   we have
 $R(K)=H^*(BG/K)$ and for $L\leq K$ we define 
$$L_{K/L}M=\cEi_{K/L}M. $$

\subsubsection{The dual system for connected subgroups of a torus}
 In the toral example $\fX_c$ with connected subgroups,  we have
$R(K)=\cO_{\cF /K}$ and for $L\leq K$ we define 
$$L_{K/L}M=\cEi_{K/L}M. $$

\subsection{Localizations of products}
There is a second problem with permitting $M(\wp)$ to lie in an abelian
category $\cA (\wp)$ depending on $\wp$, because we need to take
products of objects from different categories. We therefore assume 
that the categories are all enriched in an abelian category $\cA$, and
that products in $\cA (\wp)$ are created in $\cA $. We may then hope
that the adelic cohomology takes values in $\cA$.

Given this, we then need to take localizations of products in the form
$$L_{\wp_1}\prod_{\wp_2\leq \wp_1}M(\wp_2).$$
Here we assume our relative system of localizations comes from a
relative system of Euler classes. 
$$L_{\wp_1/\wp_2}M(\wp_1)=\cEi_{\wp_1/\wp_2}M(\wp_1)=
\colim \left( M(\wp_1) \stackrel{e_1}\lra 
M(\wp_1)\stackrel{e_1e_2}\lra M(\wp_1) \stackrel{e_1e_2e_3}\lra\cdots\right) $$ 
We then define the localization of the product by a precisely similar colimit
$$L_{\wp_1}\prod_{\wp_2\leq \wp_1}M(\fq)=\cEi_{\wp_1}\prod_{\wp_2\leq
  \wp_1}M(\fq)=
\colim (\prod_{\wp_2\leq \wp_1}M(\wp_2) 
\stackrel{e_1}\lra 
\prod_{\fq\leq \wp}M(\wp_2) \stackrel{e_1e_2}\lra \prod_{\fq\leq
  \wp}M(\wp_2)  \stackrel{e_1e_2e_3}\lra\cdots .$$
This time there is some interpretation since if $e\in \cE_{\wp}$, for
each $\fq\leq \wp$ we may write $e=e'_{\fq}e''_{\fq}$, and in the
$\fq$ factor we interpret multiplication by $e''_{\fq}$ as an
isomorphism, so that in effect $e$ is multiplication by $e'_{\fq}$. 

\subsection{Generalized adelic cohomology}
In the context that we have
\begin{itemize}
\item a coefficient system $R$ of commutative
rings
\item  a coefficient system $M$ of $R$-modules
\item a dual system of Euler classes
\end{itemize}
then  we can define the adelic chain complex and adelic cohomology
 $H^*_{ad}(\fX; \cEi, M)$ by the same formula as before.

\section{Examples of adelic cohomology}
None of  following three examples are covered by the original
definition of adelic cohomology, but the second is covered by the
varying-category version of Section \ref{sec:varcat}. 

\subsection{Projective curves}
\newcommand{\cOC}{\cO_C}
\newcommand{\cKC}{\cK_C}
If $C$ is a smooth irreducible projective curve we may take $\fX$ to
consist of the irreducible subvarieties (i.e., the closed points of
$C$ are minimal and the generic point is maximal; by a theorem of
Thomason this is the Balmer spectrum of perfect complexes of
quasi-coherent sheaves).

We take the coefficient system to be given by the completed stalks
$(\cOC)_{\wp}^{\wedge}$ of the structure sheaf. The adelic cochain
complex is 
$$C^*_{ad}(\spcc( C))=\left(
\cKC\oplus \prod_x(\cOC)_{x}^{\wedge}
\lra \cKC  \tensor_{\cOC}\prod_x(\cOC)_{x}^{\wedge}
\right)$$
where $\cKC$ is the ring of meromorphic functions, $x$ runs through
the closed points  and $(\cOC)_{x}^{\wedge}$ is the completed stalk at
$x$. 
\begin{lemma}
For any locally free $\cOC$-module $\cF$ of finite rank, the adelic cohomology is the sheaf cohomology. 
$$H^*_{ad}(\spcc( C); \cF )=H^*(C; \cF). $$
\end{lemma}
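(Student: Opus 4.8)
The plan is to recognize the displayed two-term adelic complex as (a model for) the classical adelic complex of Beilinson--Parshin for a curve, whose hypercohomology computes sheaf cohomology, and to run this through the general machinery already set up. For a smooth irreducible projective curve $C$, the poset $\fX = \spcc(C)$ consists of the closed points $x$ (of dimension $0$) and the generic point $\eta$ (of dimension $1$), so the catenary apparatus of Section~\ref{sec:adeliccube} applies with $r=1$. First I would unwind Definition~\ref{defn:adeliccochains} with the completion coefficient system $M(\wp) = \cF_{\wp}^{\wedge}$ and the localization system $L_{\wp}$ given by localization at $\wp$, decomposing the cochains by dimension vectors as in Section~5 (Collecting cochains): the dimension-$(0)$ part contributes $\prod_x \cF_x^{\wedge}$, the dimension-$(1)$ part contributes $L_{\eta}\cF_{\eta}^{\wedge} = \cK_C \tensor_{\cOC}\cF$ (the generic fibre, i.e.\ the rational sections, using that $\cF$ is locally free of finite rank so the stalk at $\eta$ is free over $\cK_C$), and the dimension-$(1>0)$ part contributes $L_{\eta}\prod_x \cF_x^{\wedge} = \cK_C\tensor_{\cOC}\prod_x \cF_x^{\wedge}$, the restricted product of completed stalks. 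Totalizing the resulting punctured $2$-cube gives exactly the displayed complex.

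Next I would identify this with the standard adelic resolution. The object $\prod_x \cF_x^{\wedge}$ together with $\cK_C\tensor_{\cOC}\prod_x\cF_x^{\wedge}$ is, after the colimit interpretation of localization-of-products explained in Section~\ref{sec:varcat} (``Localizations of products''), precisely the ring of $\cF$-valued adeles $\mathbb{A}_C(\cF)$ in the restricted-product sense, and the two-term complex
\[
\cK_C\tensor_{\cOC}\cF \;\oplus\; \prod_x \cF_x^{\wedge}\;\lra\; \mathbb{A}_C(\cF)
\]
is the classical Beilinson--Parshin adelic complex $\mathbb{A}^{\bullet}(\cF)$ of the curve. The key input is then the standard fact that the adelic complex is a flasque (hence $\Gamma$-acyclic) resolution of $\cF$: this is the content of \cite[\S 4]{Huber} in the functional-analytic language, or equivalently of Beilinson's original construction, specialized to dimension one, where it reduces to the exactness of $0\to \cF \to j_*j^*\cF \oplus \prod_x \cF_x^{\wedge} \to \mathbb{A}_C(\cF)\to 0$ on each open set. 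Taking global sections of this resolution and then cohomology yields $H^*(C;\cF)$.

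Finally I would assemble: the adelic cochain complex $C^*_{ad}(\spcc(C);\cF)$ \emph{is} the complex of global sections of the adelic resolution (all the sheaves involved are already defined by global data --- meromorphic functions, completed stalks, adeles --- so no further sheafification or derived global sections enters), and hence
\[
H^*_{ad}(\spcc(C);\cF) = H^*\big(\Gamma(C,\mathbb{A}^{\bullet}(\cF))\big) = H^*(C;\cF).
\]
The main obstacle, and the step deserving the most care, is the middle identification: matching the abstract ``localization of a product'' $L_{\eta}\prod_x \cF_x^{\wedge}$ produced by the formalism --- a colimit over multiplication by Euler-type classes (here, functions with prescribed zeros) --- with the genuine restricted adelic product $\mathbb{A}_C(\cF) = \{(f_x)\in \prod_x (\cK_C\tensor \cF_x^{\wedge}) : f_x \in \cF_x^{\wedge}\text{ for almost all }x\}$, and then invoking the acyclicity of the adelic resolution with coefficients in a locally free sheaf of finite rank (the finite-rank hypothesis is what makes the generic stalk free over $\cK_C$ and makes the local completions behave well). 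Everything else is bookkeeping with the dimension-vector decomposition of Section~5 and the invariance statements already proved in Lemma~\ref{lem:adelicconstcoeffs}.
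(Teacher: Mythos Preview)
Your argument is correct in outline, but it takes a different route from the paper's.

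The paper argues directly at the level of sheaves on $C$. It promotes the punctured square to a square of sheaves (with $\cKC$ the constant sheaf and each $(\cOC)_x^{\wedge}$ a skyscraper at $x$), checks that this square is a homotopy pullback by comparing the horizontal fibres (both are the local cohomology sheaf, since local cohomology of a local ring agrees with that of its completion), and then tensors with the locally free $\cF$. The conclusion follows because each of the three non-initial sheaves is individually $\Gamma$-acyclic: $\cKC\tensor\cF$ is a constant sheaf on an irreducible space, and the skyscraper sheaves are injective. So the adelic complex \emph{is} the complex of global sections of an acyclic resolution of $\cF$, and one reads off $H^*(C;\cF)$.

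Your version reaches the same endpoint by identifying the complex with the Beilinson--Parshin adelic resolution and invoking \cite{Huber} for its flasqueness. That is legitimate, but it imports more machinery than the curve case needs; the paper's acyclicity argument is elementary and self-contained. Two smaller comments: you do not need the Section~\ref{sec:varcat} ``localization of products via Euler classes'' apparatus here, since $L_{\eta}$ is literally $\cKC\tensor_{\cOC}(-)$ in the module category and $\cKC\tensor_{\cOC}\prod_x\cF_x^{\wedge}$ is already the restricted product; and Lemma~\ref{lem:adelicconstcoeffs} plays no role in either argument. What your approach buys is that it makes the connection to the general Beilinson--Parshin framework explicit (as the paper's Remark after the lemma hints); what the paper's approach buys is a proof that does not leave the page.
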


\begin{remark}
In effect the adelic complex is the embodiment of the residue approach
to cohomology. 
\end{remark}
\begin{proof}
We may work in the category of sheaves and see that the square
$$\xymatrix{
\cOC\ar[r]\ar[d] & \cKC\ar[d]\\
\prod_x(\cOC)_{x}^{\wedge}\ar[r]& \cKC 
\tensor_{\cOC}\prod_x(\cOC)_{x}^{\wedge}
}$$
is a homotopy pullback (where now $\cKC$ is the constant sheaf of
meromorphic functions and $(\cOC)_x^{\wedge}$ is a skyscraper sheaf
concentrated at $x$). Indeed, the fibres of both horizontals are
isomorphic because the local cohomology of a ring and its completion
are isomorphic. Tensoring with a locally free sheaf $\cF$ preserves
this property. The homotopy pullback square gives a cofibre sequence
of sheaves. Taking cohomology gives the adelic complex: indeed, since
$\cKC$ is the constant sheaf $H^*(C; \cKC)=H^0(C; \cKC)
=\cKC$, and since $(\cOC)_x^{\wedge}$ is an injective skyscraper sheaf. 
$H^*((\cOC)_{x}^{\wedge})=H^0((\cOC)_{x}^{\wedge})=(\cOC)_{x}^{\wedge}$. 
\end{proof}

\subsection{Rational $SO(2)$-spectra}
As mentioned above, 
$\spcc(\mbox{$SO(2)$-spectra/$\Q$}) $
is a partially ordered set with one maximal (generic) prime
$\wp_{SO(2)}$ and   closed points $\wp_C$ corresponding to the finite
cyclic subgroups $C$. Each $\wp_C\leq \wp_{SO(2)}$ and there are no
other containments. 

The structure sheaf has value $\Q=H^*(BT/T)$ on $\wp_{SO(2)}$ and $\Q
[c]=H^*(BT/C)$ at $\wp_{C}$. The adelic complex is 
$$C^*_{ad}(SO(2); \cEi, H^*(BG/\bullet))=\left( \Q\oplus \prod_C\Q[c]\lra 
\cEi \prod_C\Q[c]\right), $$
and 
$$H^i_{ad}(SO(2); \cEi, H^*(BG/\bullet))=\dichotomy{\Q \hspace{13.5ex}\mbox{ if } i=0}
{\bigoplus_CH_*(BT/C) \mbox{ if } i=1}. $$

\subsection{Rational $O(2)$-spectra}
We describe an example where it is clear how to define an appropriate 
cohomology  but which is not covered by the version of adelic 
cohomology described here. 

The point is that the Balmer spectrum of rational $O(2)$-equivariant 
cohomology theories is not topologically discrete. In fact there is a 
homeomorphism 
$$\spcc(\mbox{$O(2)$-spectra/$\Q$}) =
\cC \coprod \mcD$$
where $$\cC=\spcc(\mbox{$SO(2)$-spectra/$\Q$}) \mbox{ and }\mcD=\{ (D_2), (D_4), (D_6), \ldots, O(2)\}$$
where $\mcD$ is topologized as the subset $\{1/n\st n\geq 1\} \cup \{0\}$ of $\R$.
 
We have already defined appropriate coefficients for 
$\spcc(\mbox{$SO(2)$-spectra/$\Q$})$, but the difference is that the
structure sheaf has stalks $H^*(BO(2)/C)=H^*(BSO(2)/C)^{C_2}=\Q [d]$ with $d=c^2$ of
codegree 4. Hence
$$H^i_{ad}(\cC ; \cEi, H^*(BO(2)/\bullet))=\dichotomy{\Q \hspace{13.5ex}\mbox{ if } i=0}
{\bigoplus_CH_*(BO(2)/C) \mbox{ if } i=1}. $$

Over $\mcD$ it makes sense to  treat the
coefficients as a  sheaf over $\mcD$ and to take sheaf cohomology
rather than simply taking the product of stalks. The relevant sheaf
for equivariant homotopy theory is the constant sheaf $\Q$, so that
$$H^0(\mcD; \Q)=C(\mcD, \Q). $$
The first sheaf cohomology is an uncountable vector space, which does
not appear relevant to $\pi^{O(2)}_*(S^0)\tensor \Q$. On the other
hand, the constant sheaf is injective in the category of realizable
sheaves, reflecting the fact that understanding realizable objects is an
important ingredient in constructing a model.

\section{Adelic cohomology and the homotopy of the sphere}

The author's original  motivation for the definition of adelic cohomology came from  the
occurrence of adelic cochains  in the study of rational torus-equivariant
cohomology theories. By tom Dieck splitting the rational stable homotopy groups of the
sphere are well known additively. If $G$ is a torus we have
$$\pi^G_*(\bbS )\cong \bigoplus_K\Sigma^{\codim (K)}H_*(BG/K). $$
In some sense this is an input to the algebraic model of
\cite{tnqcore}, so we are certainly not expecting an independent
calculation of $\pi^G_*(\bbS)$. On the other hand, the expression of $S^0$ as a homotopy
pullback of a diagram of ring spectra does show that the adelic
cohomology gives information about the ring structure. It is also
interesting to see how information about completed objects (in particular uncountable vector
spaces) feeds into the final answer (which is torsion and countable).  

\begin{prop}
For any torus $G$, using the generalized adelic coefficients of
Example \ref{subsubsec:egtorus},  there is a spectral sequence
$$H^*(\fX_c; \cEi, \cO_{\cF/\bullet} )\Rightarrow \pi^G_*(S^0)$$
\end{prop}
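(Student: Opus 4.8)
The plan is to construct the spectral sequence from the expression of the rational equivariant sphere as a homotopy pullback of a cube of commutative ring spectra, following the main theorem of \cite{tnqcore}. First I would recall that for an $r$-dimensional torus $G$, \cite{tnqcore} expresses $\bbS = S^0$ (as a rational $G$-spectrum, or equivalently as the unit in the algebraic model $\cA(G)$) as the homotopy pullback of an $(r+1)$-cube whose vertices are built from the function spectra (the ``completions'') and the Euler-class localizations attached to the closed subgroups $K$ of $G$. The poset indexing this cube is precisely the punctured $(r+1)$-cube $(\Delta^r)'$, and the dimension stratification of $\fX_c$ into connected subgroups of each dimension $d$ matches the face structure of $\Delta^r$ as in Section~5 (``Collecting cochains''). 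Taking homotopy groups $\pi^G_*(-)$ of this cube gives an $(r+1)$-cube in graded abelian groups; totalizing a homotopy-limit cube produces a spectral sequence whose $E_1$-page is the associated cochain complex of the cube and which converges to $\pi^G_*$ of the homotopy pullback, i.e.\ to $\pi^G_*(S^0)$. This is the standard Bousfield--Kan / cubical descent spectral sequence, finite because $r < \infty$, so convergence is not an issue.

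The key point is then to identify the $E_1$-term (or, after the first differential, the $E_2$-term) of this spectral sequence with the generalized adelic cochain complex $C^*_{ad}(\fX_c; \cEi, \cO_{\cF/\bullet})$ of Section~\ref{sec:varcat}. Here is where the homotopy groups of the cube must be computed explicitly: at the vertex of $\Delta^r$ corresponding to dimension vector $\bd = (d_0 > d_1 > \cdots > d_s)$, the value of the cube is a product over flags of subgroups $K_0 > K_1 > \cdots > K_s$ (with $\dim K_i = d_i$ and each inclusion cotoral) of an Euler-localized function spectrum, and applying $\pi^G_*$ gives exactly
$$\prod_{\wp_0} L_{\wp_0} \prod_{\wp_1 < \wp_0} L_{\wp_1} \cdots \prod_{\wp_s < \wp_{s-1}} L_{\wp_s}\, \cO_{\cF/\wp_s},$$
where $L_{\wp}$ is the Euler-class localization $\cEi_{\wp}$ and $\cO_{\cF/\wp}$ is the value of the coefficient system of rings. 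This requires knowing that $\pi^G_*$ of the relevant function spectrum at a connected subgroup $K$ is $\cO_{\cF/K} = \prod_{\Kt} H^*(BG/\Kt)$ as in Example~\ref{eg:completion}(iii), that $\pi^G_*$ commutes with the products appearing (because products of spectra compute products on homotopy), and that $\pi^G_*$ turns the Euler-class localization of spectra into the algebraic Euler-class localization $\cEi_{\wp}$ (this is exactly the ``relative localization reflecting the homotopical one'' discussed in Section~\ref{sec:varcat}, and is where the multiplicative/Euler-class structure of Subsubsection~\ref{subsubsec:egtorus} is used). One also checks the $E_1$-differential agrees with the adelic coboundary $\delta = \sum_i (-1)^i \delta_i$: the face maps of the cube are the inflation maps and the unit maps $1 \to A_{\wp}$, which is precisely how the $\delta_i$ were defined in Definition~\ref{defn:adeliccochains}.

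The main obstacle I expect is the localization-of-products subtlety already flagged in the excerpt: on an infinite poset (and $\fX_c$ has infinitely many closed points $\wp_F$, one for each finite subgroup $F$) the Euler-class localization $L_{\wp}$ does not commute with the infinite products, so one must verify that $\pi^G_*$ of the spectrum-level localized product is the algebraically-defined localized product $\cEi_{\wp_1} \prod_{\wp_2 \leq \wp_1} M(\wp_2)$ --- the colimit interpretation with the factor-by-factor splitting $e = e'_{\fq} e''_{\fq}$ described at the end of Section~\ref{sec:varcat}. Since the Euler-class localizations are filtered colimits (smashing with a telescope of Euler classes), $\pi^G_*$ commutes with them, but one must track carefully that the colimit is taken after the product and that it matches the prescribed algebraic formula; this is the only genuinely delicate compatibility. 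Everything else --- finiteness of the cube, convergence, and the bookkeeping of dimension vectors matching faces of $\Delta^r$ --- is formal given \cite{tnqcore} and Section~5.
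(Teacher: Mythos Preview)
Your proposal is correct and follows essentially the same route as the paper: invoke the homotopy pullback cube from \cite{tnqcore}, filter it to obtain a spectral sequence converging to $\pi^G_*(S^0)$, and identify the $E_1$-page with the adelic cochain complex via the isomorphisms $C^{\sigma}(\fX_c; \cEi, \cO_{\cF/\bullet}) \cong \pi^G_*(R(\sigma))$ so that the $E_2$-term is the adelic cohomology. Your careful discussion of the localization-of-products issue is more explicit than the paper's own proof, which simply asserts the identification as the motivating definition, but it is exactly the compatibility that the generalized adelic framework of Section~\ref{sec:varcat} was set up to handle.
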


\begin{proof}
One of the main results of \cite{tnqcore} is the fact that the
equivariant sphere $S^0$ is
the homotopy pullback of a cubical diagram of ring $G$-spectra. 
Filtering the cube by distance from the empty face  gives a spectral
sequence
$$H^*(\pi^G_*(R(\sigma))\Rightarrow \pi^G_*(S^0),  $$
where $\sigma$ runs through dimension vectors $d_0>d_1>\cdots >d_s$ 
(i.e., it runs through non-empty subsets of the dimension poset $\{ 0<1< \ldots <r\}$). 
The definition of the adelic cochains was motivated by the
isomorphisms 
$$C^{\sigma}(\fX_c; \cEi, \cO_{\cF/\bullet} )\cong \pi^G_*(R(\sigma) ), $$
and by construction the maps in the cube correspond to the
differentials. Accordingly the spectral sequence takes the form in the statement.
\end{proof}

This raises the question of the significance of the individual
cohomology groups, and the behaviour of the spectral sequence. 

\begin{prop}
The adelic cohomology corresponds to the tom Dieck splitting
$$H^s(\fX_c; \cE, \cOcF )\cong \bigoplus_{\codim (K)=s} H_*(BG/K), $$
and the spectral sequence collapses to show
$$\pi^G_*(\bbS)=\bigoplus_s H^s(\fX_c; \cE, \cOcF )=
\bigoplus_K\Sigma^{\codim K}H_*(BG/K).$$
\end{prop}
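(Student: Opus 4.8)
The plan is to compute the adelic cochain complex $C^*(\fX_c; \cEi, \cO_{\cF/\bullet})$ directly and identify its cohomology with the tom Dieck summands, then argue the spectral sequence of the previous proposition degenerates for dimension reasons. The key structural input is the description of the adelic complex as a totalization of an $(r+1)$-cube indexed by dimension vectors $\bd = (d_0 > d_1 > \cdots > d_s)$, where the cochain group in dimension $\bd$ is a product of localizations $\cEi_{\wp_0}\cdots\cEi_{\wp_s}$ applied to $\cO_{\cF/\wp_s}$. First I would stratify by the codimension of the relevant closed subgroup: in the toral setting with connected subgroups, the Euler-class localizations $\cEi_{K/L}$ invert precisely the Euler classes of representations with $K$-fixed points zero but $L$-fixed points possibly nonzero, and the completed/localized stalks assemble so that the contribution supported at a subgroup $K$ of codimension $s$ appears in cochain degree $s$. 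The crucial computation is that after inverting all the relevant Euler classes, $\cEi_{\wp_0}\cdots\cEi_{\wp_{s-1}}$ applied to $\cO_{\cF}$ at the final stage produces exactly $H_*(BG/K)$ (a torsion module, the "local cohomology at $K$"), while the maps in the cube kill everything that is not in the top cohomological slot for each $K$.

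The concrete strategy for the cohomology computation is to use the analogue of Lemma \ref{lem:adelicconstcoeffs} and the local cohomology calculations of the \v{C}ech-complex section: for each connected subgroup $K$, the sub-quotient of the adelic complex built from flags whose bottom vertex $\wp_s$ specializes to $K$ is, up to regrading, a stable Koszul / \v{C}ech complex for the multiplicative set $\cE_{K}$ acting on $\cO_{\cF/K}$, and its cohomology is concentrated in the top degree $s = \codim(K)$, where it equals $H_*(BG/K)$. Summing over all $K$ (using that $\cEi$ commutes with the relevant products, as arranged in the "Localizations of products" subsection via the colimit interpretation) gives
$$H^s(\fX_c; \cEi, \cOcF) \cong \bigoplus_{\codim(K) = s} H_*(BG/K).$$
One must check the differentials between strata of different codimension vanish on cohomology: this follows because a map from the $K$-stratum to the $K'$-stratum with $\codim K' < \codim K$ lands in a group concentrated in a strictly lower cohomological degree than where the source cohomology lives, so it is zero after passing to cohomology; the remaining internal differentials are the \v{C}ech differentials already accounted for.

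For the spectral sequence collapse, I would invoke that the $E_2$-page is $H^*(\fX_c; \cEi, \cOcF)$ by the previous proposition, and observe that $H^s$ is concentrated in total internal degree governed by $\Sigma^{\codim K}H_*(BG/K)$: the cohomological filtration degree $s$ coincides with the suspension degree $\codim(K)$, so every differential $d_k\colon E_k^{s,t} \to E_k^{s+k, t-k+1}$ must vanish because source and target, while in the same total degree, are supported on subgroups of different codimension and the differential is forced to respect the tom Dieck decomposition (it is induced by maps in the cube of ring spectra, which are compatible with the splitting established in \cite{tnqcore}). Hence $E_2 = E_\infty$, and comparing with the known additive answer $\pi^G_*(\bbS) \cong \bigoplus_K \Sigma^{\codim K}H_*(BG/K)$ forces the identification. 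The main obstacle I expect is the bookkeeping in the middle step — verifying that the Euler-class localizations applied to the infinite products of the $\cO_{\cF/K}$ genuinely produce the torsion modules $H_*(BG/K)$ in exactly the top slot, and that no extension problems or unexpected differentials survive; this requires carefully tracking the colimit interpretation of $\cEi$ acting on products from the "Localizations of products" subsection and matching it against the local-cohomology computation, rather than any deep new idea.
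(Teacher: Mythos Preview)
Your approach is the paper's: filter by the codimension of the bottom vertex $\wp_s$ of each flag, show the subquotient $\bar F^n$ has cohomology concentrated in degree $n$ equal to $\bigoplus_{\codim K=n}H_*(BG/K)$, and then invoke tom Dieck to force collapse. Two places where your write-up would need tightening before it matches the paper's argument.

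First, the sentence ``$\cEi$ commutes with the relevant products'' is not what is going on, and treating the subquotient as literally a \v{C}ech complex for a single $K$ is not correct on the nose: the products over subgroups are infinite and localization genuinely does \emph{not} commute with them. The paper's key formal lemma is that, because each element of $\cE$ has almost all components equal to $1$, the map $\bigoplus_i M_i \to \cEi\bigoplus_i M_i$ and the map $\prod_i M_i \to \cEi\prod_i M_i$ have the \emph{same cokernel}, namely $\bigoplus_i(\cEi M_i)/M_i$. Iterating this (the paper's two corollaries) is what converts the iterated localized products in $\bar F^n$ into a direct sum over flags ending at each $K$, after which the per-$K$ computation goes through. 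You have correctly flagged this as ``the main obstacle'', but the mechanism is cokernel-matching, not commutation.

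Second, your intrinsic collapse argument (``the differential is forced to respect the tom Dieck decomposition because the maps in the cube are compatible with the splitting'') is circular: compatibility of the cube with the splitting is precisely the content of collapse, not an input to it. The paper does not attempt this; it explicitly remarks that the proof of collapse \emph{depends} on tom Dieck splitting, using it only as you do in your final sentence --- the $E_2$-page already has the same size as the known abutment $\pi^G_*(\bbS)$, so no room remains for nonzero differentials.
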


\begin{remark}
The present proof of the collapse of this spectral sequence depends on tom Dieck
splitting. 
\end{remark}

\begin{proof}
We describe a filtration
$$ 0=F^{r+1}\subset F^r \subset \cdots \subset F^0=\mbox{Whole-$(r+1)$-Cube}$$
 of the cube so that the subquotients $\Fb^n=F^n/F^{n+1}$ have only one
 cohomology group 
$$H^n(\Fb^n)=\bigoplus_{\codim (K)=n} H_*(BG/K). $$
Indeed, we take 
$$F^n =\bigoplus_{\codim (d_s)\geq n} C^{d_0>\cdots >d_s}, $$
noting that this is a subcomplex since differentials either retain the last term subgroup or replace it by
a proper subgroup. 

We note that the quotient 
$$\Fb^n=F^n/F^{n+1} $$
is an $n$-cube, namely the one in which every term ends with a codimension $n$ subgroup. 
Thus
$$\Fb^0=\left( \cO_{\cF /G}\right), \Fb^1=\left( \prod_{\codim (H)=1}\cO_{\cF/H}\lra
 \cEi_G \prod_{\codim (H)=1}\cO_{\cF/H}\right)$$
and  
$$
\Fb^2=\left( 
\diagram 
\prod_{\codim (K)=2}\cO_{\cF/K}\rto \dto &
 \cEi_G \prod_{\codim (K)=2}\cO_{\cF/K}\dto\\
\prod_{\codim (H)=1}\cEi_H \prod_{K<H}\cO_{\cF/K}\rto &
\cEi_G \prod_{\codim (H)=1}\cEi_H \prod_{K<H}\cO_{\cF/K}
 \enddiagram \right)
$$
It remains to show that $\Fb^s$ has a single cohomology group in
codegree $s$, and to identify it. The collapse of the spectral
sequence uses the tom Dieck splitting, and a verification that
the factors in that decomposition  correspond to those at $E_2$. 
 \end{proof}

The formal ingredient is as follows. 
\begin{lemma}
Suppose $M_i$ is an $R_i$-module for each $i$, and that $\cE$ is a
multiplicative sequence consisting of elements $(r_i)\in \prod_i R_i$ 
almost all equal to 1. If each $M_i$ is $\cE$-torsion free then 
the vertical map 
$$\diagram 
\bigoplus_i M_i\rto \dto &\cEi \bigoplus_i M_i\dto\\ 
\prod_i M_i\rto &\cEi \prod_i M_i
\enddiagram$$
is a homology isomorphism. Both horizontals are injective, and the
common cokernel is isomorphic to 
$$\bigoplus_i (\cEi M_i)/M_i. $$
\end{lemma}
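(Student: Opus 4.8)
The plan is to analyze the square one horizontal arrow at a time and then compare. First I would observe that $\cEi$ is exact (it is a filtered colimit of multiplication-by-scalar maps, as spelled out in the "Localizations of products" subsection), so it preserves both injectivity and cokernels; this reduces most of the work to understanding the two horizontal maps $\bigoplus_i M_i \to \prod_i M_i$ and its $\cEi$-localization, together with the two localization maps going vertically.

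The key steps, in order: (1) Since each $M_i$ is $\cE$-torsion free, multiplication by any $e=(r_i)\in\cE$ is injective on $M_i$, hence on both $\bigoplus_i M_i$ and $\prod_i M_i$ (products and coproducts of monomorphisms are monomorphisms). Therefore each localization map $M_i \to \cEi M_i$ is injective, and so are the two vertical maps in the square. (2) Identify $\cEi\bigoplus_i M_i \cong \bigoplus_i \cEi M_i$: localization commutes with all coproducts since it is a colimit, and here the relevant colimit over the multiplicative set is filtered, so it commutes with the direct sum. Thus the top horizontal is $\bigoplus_i M_i \to \bigoplus_i \cEi M_i$, with cokernel $\bigoplus_i (\cEi M_i)/M_i$. (3) For the bottom horizontal, use the "almost all $r_i = 1$" hypothesis: for a fixed $e = (r_i)\in\cE$, the set $S_e$ of indices with $r_i\neq 1$ is finite, and on the complementary factors multiplication by $e$ is already an isomorphism (this is exactly the interpretation $e = e'_\fq e''_\fq$ from the text, with $e''_\fq$ a unit). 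Hence in the colimit defining $\cEi\prod_i M_i$, the transition maps only do something on the finitely many factors in $S_e$, and one gets $\cEi \prod_i M_i \cong \prod_i \cEi M_i$ — the localization does commute with this particular product. So the bottom horizontal is $\prod_i M_i \to \prod_i \cEi M_i$ with cokernel $\prod_i (\cEi M_i)/M_i$. (4) Now compare: the square becomes
$$\diagram \bigoplus_i M_i \rto \dto & \bigoplus_i \cEi M_i \dto \\ \prod_i M_i \rto & \prod_i \cEi M_i \enddiagram$$
with all maps the evident inclusions; the cokernel of the left column is $(\prod_i M_i)/(\bigoplus_i M_i)$ and of the right column is $(\prod_i \cEi M_i)/(\bigoplus_i \cEi M_i)$. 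A diagram chase (or the snake lemma applied twice) shows the induced map on these cokernels is injective, and its cokernel computes the failure of the square to be a pullback; I would instead directly show the vertical map between the two horizontal cofibre sequences induces an isomorphism on the cokernels $\bigoplus_i(\cEi M_i)/M_i = \prod_i(\cEi M_i)/M_i \big/ \big(\text{something}\big)$ — more cleanly, since both horizontals are injective with the stated cokernels, the square is homology-cartesian iff the induced map on horizontal cokernels is an isomorphism, and $\bigoplus_i (\cEi M_i)/M_i \to \prod_i (\cEi M_i)/M_i$ being an isomorphism is precisely the "almost all $r_i=1$" condition translated: an element of $\prod_i \cEi M_i$ lies in the image of localization at a single $e\in\cE$ only if it is supported (modulo $M_i$) on the finite set $S_e$, so the product of the quotients equals the direct sum.

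The main obstacle I expect is Step (3) together with the last identification in Step (4): making precise that $\cEi$ "commutes with the product $\prod_i M_i$" under the almost-all-$1$ hypothesis, and correspondingly that $\prod_i(\cEi M_i)/M_i$ collapses to $\bigoplus_i (\cEi M_i)/M_i$. This is where the hypothesis is genuinely used, and it requires being careful about the order of the colimit (over $\cE$) and the product (over $i$) — one must check that every element of $\cEi\prod_i M_i$ is represented by a fraction with a single denominator from $\cE$, and that the denominator, being almost entirely a unit, only shifts finitely many coordinates out of $\prod_i M_i$. Once that bookkeeping is done, injectivity of the horizontals and exactness of $\cEi$ make the rest a formal diagram chase.
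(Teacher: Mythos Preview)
Your overall strategy (reduce via the Snake Lemma to showing the induced map on horizontal cokernels is an isomorphism) matches the paper's, but Step~(3) contains a false claim that propagates into Step~(4). It is \emph{not} true that $\cEi \prod_i M_i \cong \prod_i \cEi M_i$: take $R_i=k[x]$, $M_i=k[x]$, and $\cE$ generated by the elements $e_n$ whose $n$th coordinate is $x$ and all others are $1$. Then $\prod_i \cEi M_i = \prod_i k[x,x^{-1}]$, but an element of $\cEi \prod_i M_i$ is represented by some $(f_i)/e$ with a single $e\in\cE$, hence has $f_i \in k[x]$ in all but finitely many coordinates. The localization of the product is the \emph{restricted} product $\{(m_i)\in \prod_i \cEi M_i : m_i\in M_i \text{ for almost all } i\}$, not the full product. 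Consequently your claimed bottom cokernel $\prod_i(\cEi M_i)/M_i$ is wrong, and the map $\bigoplus_i(\cEi M_i)/M_i \to \prod_i(\cEi M_i)/M_i$ you try to show is an isomorphism genuinely is not one.

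The fix is exactly the observation you half-make at the end of Step~(4): an element of $\cEi\prod_i M_i$ has a single denominator $e\in\cE$, and since $e$ is a unit in almost every coordinate, the element lies in $M_i$ for almost all $i$. So $(\cEi\prod_i M_i)/(\prod_i M_i)\cong \bigoplus_i (\cEi M_i)/M_i$ \emph{directly}, without passing through the false identifications. Once you compute both horizontal cokernels this way, the induced map between them is visibly the identity and the Snake Lemma finishes the argument --- which is precisely the paper's (very brief) proof.
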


\begin{proof}
Considering the Snake Lemma, it suffices to prove that the map 
$$(\cEi \bigoplus_i M_i)/(\bigoplus_i M_i)\lra (\cEi \prod_i
M_i)/(\prod_i M_i) $$
is an isomorphism. This follows since elements of $\cE$ are almost all
1, and the $M_i$ are $\cE$-torsion free. 
\end{proof}

We now apply the formal ingredient in two stages.

\begin{cor}
If $M_L$ is a $\cE_{G/L}$-torsion free $\cO_{\cF/L}$-module for all $L$, 
there is a short exact sequence
$$0\lra \prod_{L<K}M_L\lra \cEi_K \prod_{L<K}M_L\lra
\bigoplus_{L<K}(\cEi_{K/L}M_L)/M_L\lra 0.$$
\end{cor}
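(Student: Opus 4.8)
The plan is to reduce the corollary to the preceding Lemma by a change of indexing. First I would rewrite the products and sums appearing in the statement so that the index set is a single set rather than a poset interval. The set $\{L : L < K\}$ is the relevant index set, and for each such $L$ we are given the ring $R_L = \cO_{\cF/L}$ and the module $M_L$. So the bottom row of the square, $\prod_{L<K}M_L \to \cEi_K\prod_{L<K}M_L$, is exactly the bottom row in the Lemma once we read $i$ as ranging over $\{L : L<K\}$; similarly the top row $\bigoplus_{L<K}M_L \to \cEi_K\bigoplus_{L<K}M_L$ is the top row. Hence the first step is purely bookkeeping: match the data of the Corollary to the hypotheses of the Lemma.

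Next I would check the two substantive hypotheses of the Lemma. The first is that $\cE_K$ (here playing the role of $\cE$) can be described by a multiplicative sequence of elements $(r_L) \in \prod_{L<K} R_L$ that are almost all equal to $1$. This is where the structure of a relative system of Euler classes is used: an element $e \in \cE_K = \cE_{G/K}$ acts on the $L$-factor $M_L$ (an $\cO_{\cF/L}$-module) through its image under $R_*$, and by the factorization discussed just before the subsection ``Generalized adelic cohomology'' — namely $e = e'_L e''_L$ with $e''_L$ acting invertibly — the effective action on the $L$-factor is multiplication by $e'_L$, which for all but finitely many $L$ is a unit (indeed $e''_L = e$ and $e'_L = 1$ whenever $K \leq L'$ fails appropriately, i.e., for cofinitely many $L$ the whole of $e$ is already invertible). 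So $\cE_K$ does give a multiplicative sequence with almost all coordinates $1$. The second hypothesis is that each $M_L$ is $\cE$-torsion free in the relevant sense; but $M_L$ is assumed $\cE_{G/L}$-torsion free, and since $\cE_{G/K}$ maps into $\cE_{G/L}$ (as $\cE_{G/K}$ contributes to the generation $\cE_{G/L} = \langle R_*\cE_{G/K}, \cE_{K/L}\rangle$ when $K \geq L$), torsion freeness for $\cE_{G/L}$ implies it for the image of $\cE_{G/K}$. Thus both hypotheses hold.

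With the hypotheses verified, the Lemma gives directly that the vertical map in the square is a homology isomorphism, that both horizontals are injective, and that the common cokernel is $\bigoplus_{L<K}(\cEi M_L)/M_L$, where $\cEi M_L$ means localizing $M_L$ at the image of $\cE_K$. The final step is to identify this localization with $\cEi_{K/L}M_L$: by the factorization $e = e'_L e''_L$ with $e''_L$ already invertible on $M_L$, inverting the image of $\cE_{G/K}$ in $R_L$ is the same as inverting the $e'_L$, and these are precisely (the generators of) $\cE_{K/L}$ up to the already-inverted part — more precisely, $\cEi_K M_L = \cEi_{K/L}M_L$ because the extra denominators coming from $\cE_{G/K}$ beyond $\cE_{K/L}$ are already units on an $\cE_{G/L}$-torsion-free module, or simply because $\cE_{G/K}$ and $\cE_{K/L}$ generate the same localization of $R_L$-modules that are $\cE_{G/L}$-torsion-free. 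Substituting this identification yields the claimed short exact sequence $0 \to \prod_{L<K}M_L \to \cEi_K\prod_{L<K}M_L \to \bigoplus_{L<K}(\cEi_{K/L}M_L)/M_L \to 0$.

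The main obstacle I anticipate is the bookkeeping around the factorization $e = e'_L e''_L$ and making precise, for an infinite index set, the assertion that $\cE_K$ acts through a sequence with almost all coordinates equal to $1$ — i.e., genuinely pinning down that for each fixed $e \in \cE_K$, the set of $L<K$ on which $e$ fails to act invertibly on $M_L$ is finite. This is geometrically clear (a given Euler class is ``used up'' only at finitely many of the relevant subgroups), but writing it so that it plugs cleanly into the ``almost all equal to $1$'' hypothesis of the Lemma, and separately confirming that inverting $\cE_K$ versus $\cE_{K/L}$ gives the same result on $\cE_{G/L}$-torsion-free modules, are the two points that need care. Everything else is a direct appeal to the Lemma.
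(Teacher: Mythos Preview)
Your proposal is correct and follows essentially the same route as the paper: verify that $\cE_K$ satisfies the ``almost all equal to $1$'' hypothesis of the preceding Lemma (together with the torsion-freeness), apply the Lemma to identify the cokernel of the product map with that of the direct-sum map, and then rewrite $\cEi_K\bigoplus_{L<K}M_L$ as $\bigoplus_{L<K}\cEi_{K/L}M_L$. The paper's proof is terser---it simply asserts that $\cE_K$ meets the hypotheses and records the resulting identification---whereas you spell out the factorization $e=e'_Le''_L$ and the torsion-freeness check; your anticipated ``obstacle'' is exactly the content the paper leaves implicit.
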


\begin{proof}
We need only observe that $\cE_K$ satisfies the hypotheses for $\cE$
in the lemma. Then we know that the cokernel of  
$$\prod_{L<K}M_L\lra \cEi_K \prod_{L<K}M_L$$
 is the same as that of 
$$\bigoplus_{L<K}M_L\lra \cEi_K \bigoplus_{L<K}M_L=\bigoplus_{L<K}\cEi_{K/L} M_L
$$
\end{proof}

\begin{cor}
If $M_{K_s}$ is a $\cE_{G/K_s}$-torsion free $\cO_{\cF/K_s}$-module, 
there is a short exact sequence
\begin{multline*}
0\lra \prod_{K_0<K} \cEi_{K_0} \cdots
\cEi_{K_{s-1}}\prod_{K_s<K_{s-1}} M_{K_s} \lra 
\cEi_K \prod_{K_0<K} \cEi_{K_0} \cdots
\cEi_{K_{s-1}}\prod_{K_s<K_{s-1}} M_{K_s} \\
\lra 
\bigoplus_{K>K_0>\cdots >K_s}(\cEi_{K/K_s}M_{K_s})/M_{K_s}\lra 0.
\end{multline*}
\end{cor}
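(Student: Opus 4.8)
The plan is to derive this corollary from the previous one by induction on $s$, pushing localizations and products inward one layer at a time. The key observation is that the intermediate object
$$M'_{K_{s-1}} := \prod_{K_s < K_{s-1}} M_{K_s},$$
equipped with its natural $\cO_{\cF/K_{s-1}}$-module structure, is again $\cE_{G/K_{s-1}}$-torsion free, because each $M_{K_s}$ is $\cE_{G/K_s}$-torsion free and, by the containment $\cE_{K_{s-1}} \subseteq \cE_{K_s}$ coming from transitivity of the Euler classes (Remark after the definition of relative Euler classes, applied with $K_{s-1} < K_s$ in the $\fX_c$ ordering — more precisely $K_{s-1} \leq K_s$ so that a representation with $K_{s-1}$-fixed points zero also has $K_s$-fixed points zero), multiplication by $\cE_{K_{s-1}}$-elements is already injective on each factor, hence on the product. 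So the inductive hypothesis applies at level $s-1$ with $M_{K_{s-1}}$ replaced by $M'_{K_{s-1}}$, and the single-level Corollary applies to finish.

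Concretely, first I would treat the base case $s=0$, which is precisely the preceding Corollary (with $M_{K_0} = M_{K}$ after relabelling, or rather the statement with one product). Then, assuming the result for $s-1$, I would observe that the left-hand inner expression
$$\prod_{K_0 < K}\cEi_{K_0}\cdots \cEi_{K_{s-1}}\prod_{K_s < K_{s-1}} M_{K_s}$$
is, by definition of the localization of a product (the colimit description in Subsection ``Localizations of products''), obtained by applying $\cEi_{K_0}\cdots\cEi_{K_{s-2}}$ and the outer product $\prod_{K_0<K}$ to the objects $\cEi_{K_{s-1}}\prod_{K_s<K_{s-1}}M_{K_s}$. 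Applying the single-level Corollary with $M_L = M_{K_s}$ ranging over $K_s < K_{s-1}$ gives a short exact sequence whose cokernel is $\bigoplus_{K_s < K_{s-1}}(\cEi_{K_{s-1}/K_s}M_{K_s})/M_{K_s}$, and I want to feed this into the inductive step. Because $\cEi_{K_0}\cdots\cEi_{K_{s-2}}$ is a filtered colimit (hence exact) and $\prod_{K_0<K}$ is exact, the short exact sequence is preserved; what remains is to recognize the resulting middle and outer terms as the ones in the statement and the cokernel as $\bigoplus_{K>K_0>\cdots>K_s}(\cEi_{K/K_s}M_{K_s})/M_{K_s}$, using transitivity $\cEi_{K/K_s} = \cEi_{K/K_{s-1}}\circ(\text{inflation})\circ\cEi_{K_{s-1}/K_s}$ on the torsion quotient.

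The main obstacle I expect is bookkeeping the interaction between the colimit defining localization of a product and the ``splitting off'' of $e = e'_{\fq}e''_{\fq}$ described in the text: one must check that, after passing to the torsion quotient $(\cEi M_L)/M_L$, the only elements of $\cE_{K}$ that act nontrivially are those not already invertible, i.e. that the quotient is genuinely supported on the ``new'' denominators at level $K$ versus level $K_{s-1}$. Equivalently, I must verify that $(\cEi_{K/K_s}M_{K_s})/M_{K_s}$ is itself $\cE_{G/K}$-divisible-modulo-torsion in the appropriate sense so that the next application of the Lemma still sees it as ``$\cE$-torsion free'' input — this is where the precise form of the Snake Lemma argument in the Lemma, together with the ``almost all equal to $1$'' hypothesis on the multiplicative sequence, does the work. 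Once that compatibility is pinned down, the induction closes and the short exact sequence is exactly the asserted one.
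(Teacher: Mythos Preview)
Your plan---induction on $s$, invoking the previous corollary at each stage---is exactly the paper's approach: its entire proof reads ``We apply the previous corollary $s$ times.''  The technical points you flag (torsion-freeness of the intermediate products, transitivity of the relative Euler classes, the colimit description of localization-of-products) are the right things to verify, and the paper evidently regards them as routine. Your final worry, however, is misplaced: the iteration never feeds the torsion quotient $(\cEi_{K/K_s}M_{K_s})/M_{K_s}$ back in as input to the Lemma, so there is no need to check any divisibility condition on it; at each stage one applies the previous corollary to a product of torsion-free modules (or a localization thereof, which remains torsion-free), and the absorption property $\cEi_K\cEi_{K_0}=\cEi_K$ together with transitivity of the $\cE_{\bullet/\bullet}$ handles the identification of the successive cokernels.
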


\begin{proof}
We apply the previous corollary $s$ times. 
\end{proof}


\section{Derived commutative algebra}
Our calculation of the adelic cohomology in commutative algebra relies 
on an argument in the derived category $\sfD (R)$. In effect, it is the 
argument of \cite{tnqcore} transposed into algebra. However there are 
some differences beyond the change of context. Firstly, the diagram of 
rings involved is slightly different (as detailed below), and secondly 
the proof has been packaged much better.

\subsection{Height, dimension and partial order}
We assume that our commutative ring $R$ is catenary and Noetherian
and of  finite dimension $r$. 
We use the Balmer ordering, so that $\fq \leq \fp$ if and only if $\fq 
\supseteq \fp$. Accordingly, $\dim (\fp)$ is the Krull dimension of $A/\fp$.
 Balmer-minimal primes are the closed points (which correspond to the ideals which are maximal under containment), and 
Balmer-maximal primes correspond to irreducible components.


\subsection{Localization, completion and local cohomology}
We write $L_{\fp}M=M_{\fp}$ for the usual localization from commutative algebra.
If $\fp$ means the algebraic prime (a subset of 
the ring $R$), then the localization inverts all elements outside 
$\fp$.  We write $\fp_b=\{ M\st M_{\fp}\simeq 0\}$ for the associated 
Balmer prime and  the localization is nullification of all elements of $\fp_b$.

We use the traditional notation 
 $\Gamma_{\fp}M$ for the derived $\fp$-power torsion functor. Thus
 if $\fp =(x_1, \ldots , x_n)$ we may use the model 
$$\Gamma_{\fp}M=(R\lra R[1/x_1])\tensor_R\cdots \tensor_R (R\lra 
R[1/x_n])\tensor_RM. $$
The functor $\Gamma_{\fp}$ is the $R/\fp$-cellularization.
(We note that \cite{BIK} uses this notation for the composite 
$L_{\fp}\Gamma_{\fp}M=\Gamma_{\fp}L_{\fp}M, $
which we will never do.) We note that the composite is smashing in the
sense that  
$$\Gamma_{\fp}L_{\fp}M \simeq (\Gamma_{\fp}L_{\fp}R)\tensor_R M . $$

We use the traditional notation 
 $\Lambda_{\fp}M$ for the derived $\fp$-completion functor,  
so that if $\fp =(x_1, \ldots , x_n)$ we may use the model 
$$\Lambda_{\fp}M=\Hom_R(\Gamma_{\fp}R, M). $$
We also write
$$V_{\fp}M=\Hom_R(L_{\fp}R, M).$$
(We note that \cite{BIK} uses $\Lambda_{\fp}$  for the composite 
$V_{\fp}\Lambda_{\fp}M=\Lambda_{\fp}V_{\fp}M, $ which we will never do.) We note that
$$\Lambda_{\fp}V_{\fp}M\simeq \Hom_R(\Gamma_{\fp}L_{\fp}R, M).$$

\begin{defn}
\label{defn:suppcosupp}
The support and cosupport of an $R$-module $M$ are defined by 
$$\supp (M)=\{ \fp\st \Gamma_{\fp}L_{\fp}R\tensor_R M\not \simeq
0\}. $$
$$\cosupp(M): =\{ \fp \st V_{\fp}\Lambda_{\fp}M\not \simeq 0\}
=\{ \fp \st \Hom_R(L_{\fp}\Gamma_{\fp}R_{\fp}, M)\not \simeq 0\}. $$
\end{defn}

\begin{remark}
(i) When $M$ is compact the support is 
$$\{ \fp \st M_{\fp}\not \simeq 0\}=\{ \fp\st M\not \in \fp_b\}, $$
 but in general the support is a proper subset of $\{\fp \st
 M_{\fp}\not \simeq 0\}$. 

(ii) Since the ring is Noetherian, for any prime $\fp$ we may choose a
compact object $K_{\fp}$ so that $\supp
(K_{\fp})=\overline{\{\fp\}}$. For example if $\fp=(x_1, \ldots, x_n)$
we may take $K_{\fp}=(R\stackrel{x_1}\lra R)\tensor_R \cdots \tensor_R
(R\stackrel{x_n}\lra R)$. 
\end{remark}

The main fact we use is that an object is trivial if it has empty support or if it has 
empty cosupport. It is helpful to bear in mind that  the support and the cosupport have the same
Balmer-minimal elements  \cite[Theorem 4.13]{BIK}.

\subsection{Semiorthogonal decompositions by support}
We will want to consider collections  $\cF$ of primes closed under 
specialization (`families') and collections  $\cG$ of primes closed under 
generalization (`cofamilies'). If $\cF$ is a family, we write
$\tilde{\cF}$ for the complementary cofamily. 

In particular we consider the subgroups above and below a fixed prime  $\fq$: 
$$\Lambda (\fq )=\{ \fp\st 
\fp\leq \fq \}  \mbox{ and } V(\fq )=\{ \fp\st \fq \leq \fp \}   .$$
The first is a family (the closure of $\{ \fq\}$) and the 
second is a cofamily. 

Given a family $\cF$,  we may consider the set of Koszul objects for primes in $\cF$. Taking the cellularization with respect
to these small objects gives  $\Gamma_{\cF}X$ (so that
$\Gamma_{\fp}=\Gamma_{\Lambda (\fp)}$) and the nullification gives
$\LcFt X$ (so that $L_{\fp}=L_{V(\fp)}$). We then have a natural  cofibre sequence
$$\Gamma_{\cF}X \lra X\lra \LcFt X  $$
with 
$$\supp (\Gamma_{\cF}X)=\supp  (X) \cap \cF \mbox{  and }\supp
(\LcFt X)=\supp  (X) \setminus \cF. $$ 

If $\cF$ is the family of primes of dimension $\leq i$ and
$\tilde{\cF}$ is the complementary cofamily of primes of dimension
$\geq i+1$ we  write 
$$M_{\leq i}\lra M\lra M_{\geq i+1}$$
for the cellularization and nullification.  We say that a map $X\lra
Y$ is a $(\leq i)$-equivalence if it induces an equivalence $X_{\leq
  i}\lra Y_{\leq i}$, or equivalently if it is a equivalence when
tensored with any compact object $K$ with $\supp (K)$ consisting of
primes of dimension $\leq i$.

\section{The adelic homotopy pullback cube}
\label{sec:adeliccube}

Our analysis is based on expressing an $r$-dimensional ring as the 
homotopy pullback of an $(r+1)$-cube $C$ of simpler rings. For the integers $R=\Z$
we have the classical Hasse square 
$$\diagram 
\Z \rto \dto  & \Q \dto \\
\prod_p\Z_p^{\wedge} \rto   &  \Q \tensor 
\prod_p\Z_p^{\wedge}, 
\enddiagram$$
and in general it is a suitably arranged version of the adelic chain complex.

\subsection{The local and adelic cubes}
We are going to describe the diagram $\Rad$ of rings
which is cubical in the sense that it is a functor from an
$(r+1)$-cube $C$ to commutative rings. 
There are two natural views of the $(r+1)$-cube: as a product of copies
of $\bbI=(0<1)$ and as the set of subsets of $\{ 0, 1, \ldots ,
r\}$. The latter point of view focuses on the important features,
whilst the former helps us draw pictures. 

We consider subsets $\boldd=(d_0>d_1> \cdots >d_s)$ of $\{0,1, \ldots ,
r\}$ and view this as a flag of dimensions. We will display this
subset at a point of the cube with coordinates $(a_0, a_1, \ldots , a_r)$
where each coordinate $a_i$ takes the value $1$ if one of the
dimensions is $i$, and otherwise takes the value 0. 

\begin{defn}
\label{defn:RlocRad}
The {\em adelic} diagram is defined by 
$$\Rad (\boldd):=C^{\boldd}(\spec (R); L, \Lambda R). $$
\end{defn}

If $R$ is 1-dimensional $\Rad$ is a  minor variation on
the Hasse square. 
It is worth writing the diagram completely in the 2-dimensional
irreducible case. The layout is 
$$\diagram
&(010)\rrto \ddto&&(110)\ddto\\
(000)\rrto \ddto \urto&& (100) \ddto \urto&\\
&(011)\rrto &&(111)\\
(001)\rrto  \urto&& (101) \urto&
\enddiagram$$
and the diagram $\Rad$ of rings is 
$$\diagram 
&\prod_{\fp}(R_{\fp}^{\wedge})_{\fp}\rrto \ddto&&R_{(0)}\otimes \prod_{\fp}(R_{\fp}^{\wedge})_{\fp}\ddto\\
R\rrto \ddto \urto&& R_{(0)} \ddto \urto&\\
&\prod_{\fp}R_{\fp}\otimes \prod_{\fm\leq \fp}R_{\fm}^{\wedge}\rrto &&R_{(0)}\otimes 
\prod_{\fp}R_{\fp}\otimes \prod_{\fm\leq \fp}R_{\fm}^{\wedge}\\
\prod_{\fm}R_{\fm}^{\wedge} \rrto  \urto&& R_{(0)} \otimes \prod_{\fm}R_{\fm}^{\wedge} \urto&
\enddiagram$$

\subsection{Strategy}
First we recall the Cubical Reduction Principle for homotopy
pullbacks.  A cubical diagram  $X: C\lra \bbD$ is a homotopy pullback if the initial
point $X(\emptyset)$ is the homotopy inverse limit over the punctured cube
$PC$. It is thus clear that a 0-cube is a homotopy pullback if
$X(\emptyset)\simeq *$
For a 1-cube $X: \bbI \lra \mcD$ write $X_f=\fibre (X(0)\lra
X(1))$ for the homotopy fibre.  This diagram is a   homotopy 
pullback if and only if the map $X(0)\stackrel{\simeq}\lra X(1)$ is an
equivalence which happens if and only if $X_f\simeq *$. 
  
Now suppose $C =\bbI \times C'$, and note that $X: C \lra \bbD$ induces
a cube $X^1_f: C'\lra \bbD$ of homotopy fibres, where the $1$ refers
to the fact that the fibre has been taken with respect to the first
coordinate.  The Cubical Reduction Principle states that 
 the diagram  $X$ is a homotopy pullback if and only if $X_f^1$ is a
 homotopy pullback.

\begin{thm}
The diagram $\Rad$ is a homotopy pullback. 
\end{thm}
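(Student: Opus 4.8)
The plan is to prove that $\Rad$ is a homotopy pullback by induction on the dimension $r$, using the Cubical Reduction Principle to peel off one coordinate at a time. The natural coordinate to strip off first is the one corresponding to dimension $0$, i.e.\ the closed points (the last coordinate $a_r$ in the pictures above). Concretely, I would write $C = C' \times \bbI$ where the $\bbI$-factor records whether the flag of dimensions contains $0$, and then compute the cube $(\Rad)^{\mathrm{last}}_f$ of homotopy fibres in that last coordinate. By construction of the adelic cochains (Definition \ref{defn:adeliccochains}), the map from the $a_r = 0$ vertex to the $a_r = 1$ vertex over a fixed partial flag $\boldd' = (d_0 > \cdots > d_s)$ with $d_s \geq 1$ is, after applying the outer localizations and products, the natural map
$$M(\wp_s) \lra \prod_{\fm \leq \wp_s} M(\fm) = \prod_{\fm \leq \wp_s} R_\fm^{\wedge},$$
and the fibre of $R \lra R_\fm^\wedge$ at a closed point is $\Gamma_\fm R$, the derived $\fm$-power torsion. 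So the fibre cube should be identified (up to the outer products and localizations, which are exact) with an adelic-type cube built from local cohomology objects $\Gamma_{\fp} L_{\fp} \cdots$ — i.e.\ with the cube for the \emph{local} theory $\Rloc$ or its analogue over $R/\!\!\sim$ — which one then recognizes, via the stable Koszul/Čech identifications of Section 7 (Lemma \ref{lem:adelicconstcoeffs} and its corollary), as a homotopy pullback by a lower-dimensional instance of the same statement, or directly because such a cube has the form of a local cohomology computation that is already known to be a pullback.

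The key steps, in order, are: (1) set up the splitting $C = C' \times \bbI$ compatibly with the dimension-vector decomposition of cochains from Section 5, so that the fibre cube $(\Rad)_f$ is again an $r$-cube indexed by subsets of $\{0, 1, \ldots, r-1\}$ (reindexed by deleting the deleted dimension $0$); (2) identify the fibre of each edge map using the defining cofibre sequence $\Gamma_\fp X \to X \to \LcFt X$ of Section 9 at the bottom stratum, together with the smashing property $\Gamma_\fm L_\fm M \simeq \Gamma_\fm L_\fm R \otimes_R M$ and the fact that localizations and products are exact so they commute with taking fibres; (3) observe that the resulting fibre cube is exactly the adelic cube $\Rad$ for a \emph{lower-dimensional} situation — either for the ring after removing the height-$0$ stratum, or, more precisely, for the family/cofamily decomposition $M_{\leq i} \to M \to M_{\geq i+1}$ restricted appropriately — and that the base case $r = 0$ is the trivial assertion that $R_{\mathfrak m}^\wedge$-type data at a single closed point is a (homotopy) pullback of a $0$-cube, which holds since a single object is its own limit; (4) conclude by the Cubical Reduction Principle. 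It is worth noting that the three coefficient variants $(L, \Lambda R)$, $(L, \Lambda' R)$, $(\Lambda L, R)$ only affect which vertices carry completions and in what order, so the same inductive skeleton applies to each; the differences are absorbed in step (2) where one checks the relevant edge fibre is still a derived torsion object (using $\Lambda_\fp V_\fp M \simeq \Hom_R(\Gamma_\fp L_\fp R, M)$ and the support/cosupport vanishing criteria of Section 9).

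The main obstacle I anticipate is step (3): cleanly matching the fibre cube with a genuinely lower-dimensional instance of the adelic cube, rather than just "a cube of the same shape with torsiony entries''. The issue is bookkeeping — the fibre cube's entries are objects like $\Gamma_{\fm} L_{\fp_{s-1}} \cdots L_{\fp_0} R$ sitting inside nested products, and one must check that the combinatorics of which localizations $L_{\fp_i}$ survive, together with the left-absorbative relation $L_{\fp_i} L_{\fp_j} \cong L_{\fp_i}$ for $\fp_i \geq \fp_j$, exactly reproduces the defining formula for $C^{\boldd'}(\spec(R); L, \ldots)$ with the value $M(\fm)$ at the bottom replaced by the homotopy fibre $\Gamma_\fm R$ (equivalently $R/\!\!\sim$-local data). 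Once the identification is made the Cubical Reduction Principle does the rest essentially formally, but I expect the careful reindexing — especially handling the $\delta_{s+1}$-type differential that enlarges the product over primes $\fm \leq \wp_{s-1}$ — to be where the real work lies, and where one must be most careful that the "larger products over all maximal ideals'' variant is handled uniformly (the extra factors contribute nothing to the fibre at dimension $\geq 1$ because $R \to R$ is the relevant edge there).
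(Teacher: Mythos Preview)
Your inductive skeleton differs from the paper's in an essential way, and the difference is exactly the source of the gap you anticipate in step (3).

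The paper does not peel off the dimension-$0$ coordinate globally. Instead it first reduces to \emph{local} cubes $\Rad(\fp)$ indexed by primes, using the compact Koszul objects $K_\fp$: since $K_\fp \otimes_R R_\fq \simeq 0$ unless $\fq \leq \fp$, one has $K_\fp \otimes_R \Rad \simeq K_\fp \otimes_R \Rad(\fp)$, and detecting the pullback property against all $K_\fp$ suffices. The crucial point is that in $\Rad(\fp)$ there is a \emph{unique} prime of top dimension $n = \dim \fp$, namely $\fp$ itself, so the Cubical Reduction Principle applied in the \emph{top} coordinate is the fibre of the single map $1 \to L_\fp$, which is $\Gamma_{V(\fp)^c} R \otimes_R (-)$. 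Tensoring the resulting $n$-cube with $K_\fq$ for $\fq < \fp$ then collapses it to $K_\fq \otimes_R \Rad(\fq)$, and induction on $\dim \fp$ finishes.

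Your bottom-up route does not go through as written. First, the fibre identification in step (2) is incorrect: the fibre of $R \to \Lambda_\fm R = R_\fm^\wedge$ is \emph{not} $\Gamma_\fm R$. For $R = \Z$ and $\fm = (p)$ that fibre is $\Sigma^{-1}(\Z_p^\wedge/\Z)$, an uncountable object, whereas $\Gamma_{(p)}\Z \simeq \Sigma^{-1}\Z/p^\infty$; the cofibre sequence $\Gamma_\fm R \to R \to \LcFt R$ involves localization \emph{away from} $\fm$, not completion at it. Second, and more structurally, the bottom stratum contains infinitely many primes, so the relevant edge map is $(R_{\wp_s}^\wedge)_{\wp_s} \to L_{\wp_s}\prod_{\fm < \wp_s} R_\fm^\wedge$ sitting inside further products and localizations, and its fibre admits no identification as an adelic cochain group for a lower-dimensional coefficient system: it mixes completion at the non-closed $\wp_s$ with an infinite product of completions at closed points. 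The obstacle you flag in (3) is therefore not bookkeeping but a genuine structural mismatch. The paper's device of first passing to $\Rad(\fp)$ and then peeling from the top, where there is a single prime and the fibre is a smashing colocalization, is precisely what converts this intractable infinite-bottom problem into one where the induction is transparent.
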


\begin{proof}
For each $n$-dimensional prime we may consider the set $\Lambda (\fp)$ 
of primes in $\fp$, and form the $(n+1)$-cube indexed by subsets of 
$\{ 0, 1, \ldots, n\}$. We consider the $(n+1)$-cube $\Rad (\fp)
$, with the same definition as $\Rad$, but the primes are
restricted to $\Lambda (\fp)$ and hence the dimensions are restricted 
to $\{0, 1, \ldots, n\}$. Evidently if $\fq \leq \fp$ we have maps of diagrams
$$\Rad(\fq)\lra \Rad(\fp)\lra \Rad.$$

Note  that $\Rad$ is a homotopy pullback if and only if $\Rad 
\tensor K_{\fp}$ is a pullback for all $\fp$. Since $K_{\fp}\tensor
R_{\fq}\simeq 0$ unless $\fq\leq \fp$ we see 
$$K_{\fp}\tensor_R \Rad \simeq K_{\fp}\tensor_R\Rad (\fp), $$
so that it suffices to show $K_{\fp} \tensor_R \Rad (\fp)$ is a pullback
for all primes $\fp$.

We will prove by induction that $\dim (\fp)=n$ then $\Rad (\fp)$ is a homotopy
pullback in dimension $\leq n$. The base of the induction is the trivial case $n=-1$. 

For the inductive step we suppose that $\dim (\fp)=n$ and if $\fq\leq
\fp$ with $\dim (\fq)=i \leq n-1$  then $\Rad (\fq)$ is a homotopy
pullback in dimension $\leq i$. By the Cubical Reduction Principle, 
$\Rad (\fp)$ is a homotopy pullback  if and only if $(\Rad (\fp)
)_f^{n}$  is a homotopy pullback. 

Since $\fp$ is the only $n$-dimensional prime in $\Rad(\fp)$, the
cubical reduction takes the fibre of localization at $\fp$, and in
view of  the fibre sequence $\Gamma_{V(\fp)^c}R \lra R \lra
L_{V(\fp)}R$ we have 
$$\Rad (\fp)^{n}_f(d_0>\cdots >d_s)=(\Gamma_{V(\fp)^c}R)\tensor_R \left[\Rad (\fp) (d_0>\cdots 
>d_s)\right]. $$
Any prime $\fq\leq \fp$ of dimension $\leq n$ in $V(\fp)^c$ is actually of
dimension $\leq n-1$.  Next note that 
$$K_{\fq}\tensor \Rad (\fp) (\fq_0>\cdots >\fq_s)\simeq 0$$
unless $\fq \geq \fq_0$: this uses the fact that $K_{\fq}\tensor R_{\fq_0}\simeq 0$
unless $\fq_0\leq \fq$, and the fact that  $K_{\fq}$ is compact so
that it passes inside the products. Accordingly, 
 $$K_{\fq}\tensor \Rad (\fp)^{n}_f \simeq K_{\fq}\tensor \Rad
 (\fq),  $$
which is a pullback cube by the induction hypothesis, completing
the inductive step.

By induction we see that $K_{\fp}\tensor_R \Rad (\fp) $ is a homotopy pullback for all
primes of dimension $r$, and hence $\Rad$ is a homotopy pullback as
required. 
\end{proof}

\begin{remark}
This inductive scheme would apply equally well to other localization
systems provided $K_{\fp}\tensor A_{\fq}\simeq 0$ unless $\fq\leq
\fp$, and provided the support of the fibre of $1\lra A_{\fp}$ does not contain $\fp$.

 If $A_{\fp}=\Lambda_{\fp}L_{\fp}$ as for the
Beilinson-Parshin case the first condition is clear since $K_{\fp}$ is
small and  $K_{\fp}\tensor_R L_{\fq}R\simeq 0$ unless $\fq\leq
\fp$. For the second condition we factor it as $1\lra L_{\fp}\lra
\Lambda_{\fp}L_{\fp}$, and it suffices to show that the fibres of both
factors are supported in dimension $\leq n-1$. This is true as before
for the first map. For the second the fibre is of the form $\Hom (L_{\Lambda (\fp)^c}
R, L_{\fp}M)$, and since $K_{\fp}$ is small and $\fp \not \in \Lambda (\fp)^c\cap
V(\fp)$. 
\end{remark}


\begin{thebibliography}{Hub91}

\bibitem[BG19]{adelicmodels}
Scott Balchin and J.~P.~C. Greenlees.
\newblock Adelic models for monoidal model categories.
\newblock {\em Preprint, 31pp}, 2019.

\bibitem[BIK08]{BIK}
Dave Benson, Srikanth~B. Iyengar, and Henning Krause.
\newblock Local cohomology and support for triangulated categories.
\newblock {\em Ann. Sci. \'Ec. Norm. Sup\'er. (4)}, 41(4):573--619, 2008.

\bibitem[GM92]{GMcomp}
J.~P.~C. Greenlees and J.~P. May.
\newblock Derived functors of {$I$}-adic completion and local homology.
\newblock {\em J. Algebra}, 149(2):438--453, 1992.

\bibitem[Gre08]{tnq1}
J.~P.~C. Greenlees.
\newblock Rational torus-equivariant stable homotopy {I}. {C}alculating groups
  of stable maps.
\newblock {\em J. Pure Appl. Algebra}, 212(1):72--98, 2008.

\bibitem[Gre16]{AGs}
J.~P.~C. Greenlees.
\newblock Rational torus-equivariant stable homotopy {III}: {C}omparison of
  models.
\newblock {\em J. Pure Appl. Algebra}, 220(11):3573--3609, 2016.

\bibitem[Gre19]{spcgq}
J.~P.~C. Greenlees.
\newblock The {B}almer spectrum of rational equivariant cohomology theories.
\newblock {\em J. Pure Appl. Algebra}, 223:2845--2871, 2019.

\bibitem[GS18]{tnqcore}
J.~P.~C. Greenlees and B.~Shipley.
\newblock An algebraic model for rational torus-equivariant spectra.
\newblock {\em J. Topol.}, 11(3):666--719, 2018.

\bibitem[HS98]{HopkinsSmith}
Michael~J. Hopkins and Jeffrey~H. Smith.
\newblock Nilpotence and stable homotopy theory. {II}.
\newblock {\em Ann. of Math. (2)}, 148(1):1--49, 1998.

\bibitem[Hub91]{Huber}
A.~Huber.
\newblock On the {P}arshin-{B}e\u{i}linson ad\`eles for schemes.
\newblock {\em Abh. Math. Sem. Univ. Hamburg}, 61:249--273, 1991.

\end{thebibliography}
\end{document}